\documentclass{amsart}

\usepackage[mathscr]{eucal}
\usepackage{amssymb}
\usepackage[usenames,dvipsnames]{xcolor} 
\usepackage[normalem]{ulem}
\usepackage{amsthm}
\usepackage{mathtools}
\usepackage{bbold}
\usepackage{mathdots}
\usepackage{float}
\usepackage{enumitem}

\usepackage{adjustbox}
\usepackage{amsmath}
\usepackage{wasysym}
\usepackage{tikz}
\usetikzlibrary{calc}
\usepackage{tikz-cd}
\usetikzlibrary{arrows}
\usetikzlibrary{shapes}
\usetikzlibrary{patterns.meta}
\usetikzlibrary{snakes}

\usepackage[unicode]{hyperref} 

\hypersetup{colorlinks=true,citecolor=Brown,urlcolor=Brown,linkcolor=Brown}

\usepackage{microtype}
\usepackage{aliascnt}
\usepackage[nameinlink,capitalise,noabbrev]{cleveref}

\numberwithin{equation}{section}
\newcommand{\declareeqenv}[3]{%
  \newaliascnt{#1}{equation}%
  \newtheorem{#1}[#1]{#2}%
  \aliascntresetthe{#1}%
  \crefname{#1}{#2}{#3}%
  \Crefname{#1}{#2}{#3}%
}

\declareeqenv{Thm}{Theorem}{Theorems}
\newtheorem*{Thm*}{Theorem}
\declareeqenv{Prop}{Proposition}{Propositions}
\declareeqenv{Lem}{Lemma}{Lemmas}
\declareeqenv{Cor}{Corollary}{Corollarys}
\declareeqenv{Conj}{Conjecture}{Conjectures}
\declareeqenv{Claim}{Claim}{Claims}
\declareeqenv{Hope}{Hope}{Hopes}

\theoremstyle{remark}
\declareeqenv{Def}{Definition}{Definitions}
\declareeqenv{Ter}{Terminology}{Terminologys}
\declareeqenv{Not}{Notation}{Notations}
\declareeqenv{Exa}{Example}{Examples}
\declareeqenv{Exas}{Examples}{Exampless}
\declareeqenv{Cons}{Construction}{Constructions}
\declareeqenv{Conv}{Convention}{Conventions}
\declareeqenv{Hyp}{Hypothesis}{Hypothesiss}

\declareeqenv{Rem}{Remark}{Remarks}
\declareeqenv{Rec}{Recollection}{Recollections}
\declareeqenv{Que}{Question}{Questions}

\newcommand{\nc}{\newcommand}
\nc{\dmo}{\DeclareMathOperator}

\usepackage{todonotes}

\nc{\MonSeq}{\mathsf{MSeq}}
\nc{\AsymSeq}{\mathsf{ASeq}}
\nc{\PowerSeq}{\mathsf{PSeq}}
\nc{\sigmakf}{\sigma^k\hspace{-.25ex}f}
\nc{\sigmakg}{\sigma^k\hspace{-.25ex}g}
\nc{\sigmakh}{\sigma^k\hspace{-.25ex}h}
\nc{\sigmainff}{\sigma^{\inf(f)}\hspace{-.25ex}f}
\nc{\dotimes}{\otimes^{\bbL}}
\dmo{\QCoh}{QCoh}
\dmo{\Nil}{Nil}
\dmo{\coker}{coker}
\dmo{\nil}{nil}
\nc{\PRad}{\mathsf{PRad}}
\nc{\mono}[1]{\widehat{#1}}
\nc{\radideal}[1]{\sqrt{\langle #1 \rangle}}
\nc{\ideal}[1]{\langle #1 \rangle}
\dmo{\thickiddmo}{thickid}
\dmo{\Locdmo}{Loc}
\dmo{\Lociddmo}{Locid}
\dmo{\thickdmo}{thick}
\nc{\thicksub}[1]{\thickdmo(#1)}
\nc{\thickidsub}[1]{\thickiddmo(#1)}
\nc{\Locsub}[1]{\Locdmo(#1)}
\nc{\Locidsub}[1]{\Lociddmo(#1)}
\nc{\loew}{{\ell\ell}}
\nc{\tame}{\tau}
\nc{\Specplus}{\Spec_+\hspace{-0.1em}}
\nc{\SpecRK}{\Spec(R_{\cat K})}
\nc{\altmathbb}[1]{\mathbbold{#1}}
\nc{\gP}{\altmathbb{g}_{\cat P}}
\nc{\gp}{g_{\frakp}}
\nc{\Pone}{\mathbb{P}^1_k}
\nc{\PnY}{\mathbb{P}^n_Y}

\nc{\SpcT}{\Spc(\cat T^c)}
\nc{\SpcS}{\Spc(\cat S^c)}
\nc{\frakm}{\mathfrak m}
\nc{\overbar}[1]{\mkern 1.5mu\overline{\mkern-1.5mu#1\mkern-1.5mu}\mkern 1.5mu}
\nc{\bbullet}{{\scriptscriptstyle\hspace{-1pt}\bullet}}
\nc{\bullett}{{\scriptscriptstyle\bullet}\hspace{-1pt}}
\nc{\LF}{L\hspace{-0.2ex}F}
\nc{\SpG}{\Sp^G}
\nc{\Prst}{{\cat P}\mathrm{r^{st}}}
\nc{\Mack}{\mathcal{M}ack}
\nc{\SC}{S\cat C}
\nc{\OY}{\cat O_{\hspace{-0.2ex}Y}}
\nc{\OYy}{\cat O_{\hspace{-0.2ex}Y,y}}
\nc{\OYfx}{\cat O_{\hspace{-0.2ex}Y,f(x)}}
\nc{\OX}{\cat O_{\hspace{-0.2ex}X}}
\nc{\OXX}{\OX\hspace{-0.1ex}(X)}
\nc{\OXx}{\cat O_{\hspace{-0.2ex}X,x}}
\nc{\OXy}{\cat O_{\hspace{-0.2ex}X,y}}
\dmo{\BLat}{\mathsf{BLat}}
\dmo{\BDLat}{\mathsf{BDLat}}
\dmo{\DLat}{\mathsf{DLat}}
\dmo{\Lat}{\mathsf{Lat}}
\dmo{\Pos}{\mathsf{Pos}}
\dmo{\Spectral}{\mathsf{Spec}}
\dmo{\kosz}{Kosz}
\nc{\Kosz}{\kosz}
\dmo{\Ann}{Ann}
\dmo{\Stab}{Stab}
\dmo{\Aff}{Aff}
\dmo{\Ext}{Ext}
\dmo{\Tor}{Tor}
\dmo{\gen}{gen}
\dmo{\Pic}{Pic}
\dmo{\DM}{DM}
\dmo{\DMT}{DMT}
\dmo{\DMAT}{DMAT}
\nc{\DMQ}{\DM_Q}
\dmo{\DerKal}{DMack}
\dmo{\Der}{D}
\dmo{\Dperf}{\Der_\mathrm{perf}}
\dmo{\Dps}{\Der_\mathrm{pcoh}}
\nc{\Derps}{\Dps}
\nc{\Dpfl}{\Der_{+}^{\mathrm{fl}}}
\nc{\Dzfl}{\Der_{\ge 0}^{\mathrm{fl}}}
\nc{\Dpfg}{\Der_{+}^{\mathrm{fg}}}
\dmo{\Dbfl}{\Der_{b}^{\mathrm{fl}}}
\dmo{\Dbfg}{\Der_{b}^{\mathrm{fg}}}
\dmo{\Dbcoh}{\Der^{b}_{\hspace{-0.2ex}\mathrm{coh}}}
\dmo{\Dpcoh}{{\Der}^{{\sqsupset}}_{\mathrm{coh}}}
\dmo{\Dminus}{{\Der}^{{\sqsubset}}}
\nc{\Esplit}{E_{\mathrm{split}}}
\nc{\Eeven}{E_{\mathrm{even}}}
\nc{\Eodd}{E_{\mathrm{odd}}}
\nc{\fsplit}{f_{\mathrm{split}}}
\dmo{\Ddash}{\Der^{\mathrm{--}}}
\dmo{\Ddashfl}{\Der^{\mathrm{--}}_{\mathrm{fl}}}
\nc{\Dmfl}{\Ddashfl}
\nc{\Derqc}{\Der_{\mathrm{qc}}}
\dmo{\DMot}{DMot}
\dmo{\rmH}{H}
\dmo{\piu}{\underline{\pi}}
\dmo{\Sphere}{\mathbb{S}}
\nc{\HA}{{\rmH \hspace{-0.2em}\bbA}}
\nc{\HZ}{{\rmH \hspace{-0.2em}\bbZ}}
\nc{\HZp}{{\rmH \hspace{-0.2em}\bbZ_{(p)}}}
\nc{\HZbar}{{\rmH \hspace{-0.2em}\underline{\bbZ}}}
\nc{\Fp}{{\bbF_{\hspace{-0.1em}p}}}
\nc{\HFp}{{\rmH \hspace{-0.15em}\bbF_{\hspace{-0.1em}p}}}
\nc{\DHZpG}{\Der(\HZp_G)}
\nc{\DHZG}{\Der(\HZ_G)}
\nc{\DHZH}{\Der(\HZ_H)}
\nc{\DHZK}{\Der(\HZ_K)}
\nc{\DHZGN}{\Der(\HZ_{G/N})}
\nc{\DHZGG}{\Der(\HZ_{G/G})}
\nc{\DHZCp}{\Der(\HZ_{C_p})}
\nc{\DHZGprime}{\Der(\HZ_{G'})}
\nc{\DHZ}{\Der(\HZ)}
\nc{\frakp}{\mathfrak{p}}
\nc{\frakq}{\mathfrak{q}}
\nc{\Z}{\mathbb{Z}}
\nc{\SSG}{\text{sSet}_*^G}
\nc{\sSet}{\text{sSet}}

\nc{\Loco}[1]{\Loc_{\otimes}\hspace{-0.3ex}\langle #1 \rangle}
\nc{\Coloco}[1]{\Coloc^{\mathrm{Hom}}\hspace{-0.3ex}\langle #1 \rangle}

\dmo{\Con}{Conj}
\dmo{\Sub}{Sub}
\dmo{\Id}{Id}
\dmo{\Loc}{Loc}
\dmo{\Thick}{Thick}
\dmo{\Coloc}{Coloc}
\dmo{\rmK}{\textrm{\rm K}}
\dmo{\Spc}{Spc}
\dmo{\cone}{cone}
\dmo{\End}{End}
\dmo{\Mor}{Mor}
\dmo{\Hom}{Hom}
\dmo{\id}{id}
\dmo{\incl}{incl}
\dmo{\Img}{Im}
\dmo{\im}{im}
\dmo{\Ker}{Ker}
\dmo{\ind}{ind}
\dmo{\CoInd}{coind}
\dmo{\res}{res}
\dmo{\infl}{infl}
\dmo{\triv}{triv}
\dmo{\Tel}{Tel} 
\dmo{\Mod}{Mod}%
\dmo{\opname}{op}
\dmo{\SH}{SH}
\dmo{\smallb}{b}
\dmo{\Spec}{Spec}
\dmo{\supp}{supp}
\dmo{\Supp}{Supp}
\dmo{\Cosupp}{Cosupp}
\nc{\SHc}{{\SH^c}}
\nc{\SHp}{{\SH_{(p)}}}
\nc{\SHcp}{{\SH^c_{(p)}}}
\nc{\SHG}{\SH(G)}
\nc{\SHGp}{\SH(G)_{(p)}}
\nc{\SHGc}{\SHG^c}
\nc{\SHGcp}{\SHG^c_{(p)}}
\nc{\quadtext}[1]{\quad\textrm{#1}\quad}
\nc{\qquadtext}[1]{\qquad\textrm{#1}\qquad}
\nc{\adj}{\dashv}
\nc{\adjto}{\rightleftarrows}
\nc{\bbL}{\mathbb{L}}
\nc{\bbA}{\mathbb{A}}
\nc{\bbN}{\mathbb{N}}
\nc{\bbQ}{\mathbb{Q}}
\nc{\bbZ}{\mathbb{Z}}
\nc{\bbF}{\mathbb{F}}
\nc{\bbR}{\mathbb{R}}
\nc{\cat}[1]{\mathscr{#1}}
\nc{\ie}{{\sl i.e.}, }
\nc{\into}{\mathop{\rightarrowtail}}
\nc{\inv}{^{-1}}
\nc{\isoto}{\mathop{\overset{\sim}\to}}
\nc{\isotoo}{\mathop{\overset{\sim}\too}}
\nc{\onto}{\mathop{\twoheadrightarrow}}
\nc{\too}{\mathop{\longrightarrow}\limits}
\nc{\mapstoo}{\longmapsto}
\nc{\adh}[1]{\overline{#1}}
\nc{\adhpt}[1]{\adh{\{#1\}}}
\nc{\aka}{{a.\,k.\,a.}\ }
\nc{\calF}{\mathcal{F}}
\nc{\eg}{{\sl e.\,g.}}
\nc{\Homcat}[1]{\Hom_{\cat #1}}
\nc{\hook}{\hookrightarrow}
\nc{\ihomname}{\mathsf{hom}}
\nc{\ihom}[1]{\mathsf{hom}(#1)}
\nc{\Mid}{\,\big|\,}
\nc{\MMod}{\,\text{-}\Mod}%
\nc{\op}{^{\opname}}
\nc{\oto}[1]{\overset{#1}\to}
\nc{\otoo}[1]{\overset{#1}{\,\too\,}}
\nc{\sminus}{\!\smallsetminus\!}
\nc{\poplus}[1]{^{\oplus #1}}%
\nc{\potimes}[1]{^{\otimes #1}}
\nc{\sbull}{{\scriptscriptstyle\bullet}}
\nc{\SET}[2]{\big\{\,#1\Mid#2\,\big\}}
\nc{\SpcK}{\Spc(\cat K)}
\nc{\then}{\Rightarrow}
\nc{\unit}{\mathbb{1}}
\nc{\unitT}{\unit_{\cat T}}
\nc{\unitS}{\unit_{\cat S}}
\nc{\xra}{\xrightarrow}
\nc{\phigeom}[1]{\widetilde{\Phi}^{#1}}
\nc{\phigeomb}[1]{\Phi^{#1}}
\dmo{\Oname}{O}
\dmo{\proper}{proper}
\dmo{\lenormal}{\unlhd}
\dmo{\lnormal}{\lhd}
\nc{\normal}{\trianglelefteq}
\nc{\Op}{\Oname^p}
\nc{\Oq}{\Oname^q}
\dmo{\Sp}{Sp}
\dmo{\Ho}{Ho}
\dmo{\Fin}{Fin}
\dmo{\add}{add}
\dmo{\Fun}{Fun}
\dmo{\CAlg}{CAlg}
\dmo{\CMon}{CMon}
\dmo{\CC}{\cat C}
\dmo{\DD}{\cat D}
\dmo{\OO}{\mathcal{O}}
\dmo{\Map}{Map}
\dmo{\Span}{Span}
\dmo{\N}{N}
\dmo{\Cat}{Cat}
\dmo{\colim}{colim}
\dmo{\Ch}{Ch}
\dmo{\A}{\mathbb{A}^{eff}}
\nc{\AGeff}{\mathbb{A}_G^{\mathrm{eff}}}
\nc{\BGeff}{\mathcal{B}_G^{\mathrm{eff}}}
\nc{\BG}{{\mathcal{B}_G}}
\nc{\NBGeff}{{\N}{\BGeff}}
\dmo{\Ab}{Ab}
\nc{\Set}{\mathsf{Set}}
\dmo{\ev}{ev}
\dmo{\Spcl}{Spcl}
\nc{\Funadd}{\Fun_{\add}}
\dmo{\proj}{proj}
\dmo{\cof}{cof}

\newcounter{enum-resume-hack}

\begin{document}


\title[The Serre--Grothendieck finiteness theorem]{\vspace*{-1.5em}The Serre--Grothendieck finiteness theorem for the cohomology of coherent sheaves}
\author{Beren Sanders}
\date{September 23, 2026}

\address{Beren Sanders, Mathematics Department, UC Santa Cruz, 95064 CA, USA}
\email{beren@ucsc.edu}
\urladdr{http://people.ucsc.edu/$\sim$beren/}

\maketitle

\begin{abstract}
\vspace*{-2.0em}
	We prove the converse of the Serre--Grothendieck finiteness theorem for the cohomology of coherent sheaves, originally due to Lipman. This result characterizes proper morphisms as those morphisms whose derived pushforward preserves pseudo-coherent complexes. Although the derived pushforward does not reflect pseudo-coherence, we prove that it does after twisting by all perfect complexes. This provides a new characterization of properness.
\end{abstract}

\vspace{2em}



In his seminal work on sheaf cohomology in algebraic geometry, Serre proved the following fundamental finiteness result:

\begin{Thm}[Serre]
	Let $X$ be a projective variety and let $\cat F$ be a coherent sheaf on $X$. The sheaf cohomology group $H^i(X;\cat F)$ is finite-dimensional for each $i$ and vanishes for $i \gg 0$.
\end{Thm}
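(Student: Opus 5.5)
We reduce to projective space and argue by descending induction on the degree, using Čech cohomology and Serre's vanishing theorem. Since $X$ is projective, fix a closed immersion $i\colon X \hookrightarrow \mathbb{P}^n_k$. The pushforward $i_*\cat F$ is coherent on $\mathbb{P}^n_k$, and $H^i(X;\cat F)\cong H^i(\mathbb{P}^n_k; i_*\cat F)$ because $i_*$ is exact and preserves flasque (or Čech) resolutions. So we may assume $X=\mathbb{P}^n_k$. The vanishing for $i\gg 0$ then comes for free: the standard affine cover $\{D_+(x_j)\}_{j=0}^n$ has $n+1$ members and all intersections are affine. Since $X$ is separated, Čech cohomology for this cover computes sheaf cohomology (Leray, using vanishing of higher cohomology of quasi-coherent sheaves on affines). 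Hence $H^i(X;\cat F)=0$ for $i>n$.

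Next we treat twists of the structure sheaf. From the Čech complex we compute $H^i(\mathbb{P}^n_k;\cat O(d))$ explicitly. In degree $0$ it is the space of homogeneous polynomials of degree $d$. In degree $n$ it is spanned by Laurent monomials $x_0^{a_0}\cdots x_n^{a_n}$ with all $a_j<0$ and $\sum a_j=d$. In the intermediate degrees $0<i<n$ it is zero. We obtain the middle vanishing by the usual induction on $n$, using the exact sequence $0\to \cat O(-1)\xrightarrow{x_n}\cat O\to \cat O_H\to 0$ and localization at $x_n$. In every case the answer is finite-dimensional.

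For a general coherent $\cat F$ we argue by descending induction on $i$. By Serre's theorem on global generation, $\cat F(m)$ is generated by finitely many global sections for $m\gg0$. This gives a surjection $\bigoplus_{r=1}^N \cat O(-m)\twoheadrightarrow \cat F$ whose kernel $\cat K$ is again coherent, since $X$ is noetherian. The long exact sequence yields
\[
H^i(X;\textstyle\bigoplus \cat O(-m)) \to H^i(X;\cat F)\to H^{i+1}(X;\cat K).
\]
The left term is finite-dimensional by the previous step. The right term is finite-dimensional by the inductive hypothesis applied to all coherent sheaves in degree $i+1$; the base case $i>n$ holds because everything vanishes there. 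So $H^i(X;\cat F)$ sits between two finite-dimensional spaces and is itself finite-dimensional.

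The main obstacle is the global generation step: showing $\cat F(m)$ is globally generated for $m\gg0$. The plan is to write $\cat F|_{D_+(x_j)}=\widetilde{M_j}$ with finitely generated generators $s$. We then show that each $x_j^m s$ extends to a global section of $\cat F(m)$ for large $m$. This uses the description of sections of a quasi-coherent sheaf over $D_+(x_j)$ as a localization of global sections of the twists, with $m$ taken uniform over the finitely many generators. We also need the auxiliary facts that Čech cohomology agrees with derived functor cohomology on a separated scheme and that quasi-coherent sheaves on affines are acyclic; we will treat these as standard inputs.
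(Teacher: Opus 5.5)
The paper gives no proof of this statement. It quotes Serre's theorem from FAC purely as historical motivation. The statement is also the special case $Y=\Spec(k)$ of \cref{thm:grothendieck}, which the paper likewise takes as a black box. So there is no internal argument to compare against.

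Judged on its own, your proposal is correct, and it is essentially Serre's original argument as reproduced in Hartshorne III.5.2. The steps are:
\begin{enumerate}
\item reduce to $\mathbb{P}^n_k$ via the exact functor $i_*$, which preserves flasque sheaves;
\item get vanishing above $n$ from the $(n+1)$-member Čech cover;
\item compute $H^\bullet(\mathbb{P}^n_k;\cat O(d))$ explicitly;
\item run a descending induction on $i$ using a surjection $\bigoplus \cat O(-m)\twoheadrightarrow \cat F$.
\end{enumerate}
You quantify the inductive hypothesis over all coherent sheaves at once. That is exactly what is needed, because the kernel $\cat K$ changes with $\cat F$. The inputs you flag are the right ones: Čech cohomology agreeing with derived-functor cohomology on a separated noetherian scheme, acyclicity of quasi-coherent sheaves on affines, and the extension lemma giving global generation (Hartshorne II.5.14 and II.5.17). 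Your argument yields vanishing for $i>n$ rather than the sharp bound $i>\dim X$, but the statement only asks for $i\gg 0$.

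Compared with the paper's setting, your route buys a completely elementary proof. Its cost is that it is tied to a projective embedding and the explicit cohomology of twisting sheaves. Passing to arbitrary proper morphisms, as Grothendieck did, requires Chow's lemma plus a dévissage. Interestingly, this is the same combination the paper uses for the converse inclusion in \cref{thm:proper-pseudo}. There, the role of your explicit computation of $H^\bullet(\cat O(d))$ is played by Beilinson's resolution of the diagonal, and the reduction from proper to projective goes through \cref{prop:reflects-pseudo}.
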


This is \cite[Théorème~1, p.~259]{Serre55_fac}. He then conjectures that the theorem should also be true for every complete variety. This was soon proved by Grothendieck \cite[Th\'{e}or\`{e}me~4]{Grothendieck58} and included in \cite[Th\'{e}or\`{e}me~3.2.1]{EGA3a}. Ultimately, with the development of derived categories, Grothendieck established a vast generalization of Serre's theorem:

\begin{Thm}[Grothendieck]\label{thm:grothendieck}
	Let $f:X \to Y$ be a proper morphism of noetherian schemes. The derived pushforward $f_*:\Der(X) \to \Der(Y)$ preserves pseudo-coherent complexes.
\end{Thm}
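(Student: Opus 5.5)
The plan is to reduce to the classical coherence theorem, EGA~III~3.2.1: for $f$ proper with $Y$ noetherian, each $R^if_*\cat F$ of a coherent sheaf $\cat F$ is coherent. The reduction runs through the standard dévissage for pseudo-coherent complexes, carried out in three steps.

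First, on a noetherian scheme a complex is pseudo-coherent exactly when it lies in $\Der^-_{\mathrm{coh}}$. That is, its cohomology sheaves are coherent and vanish in sufficiently large degrees. So it suffices to show that $f_*=Rf_*$ carries $\Der^-_{\mathrm{coh}}(X)$ into $\Der^-_{\mathrm{coh}}(Y)$.

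Second, we record two basic properties of $f$. Since $X$ is noetherian and $f$ is quasi-compact and separated, $f_*$ preserves quasi-coherent complexes. Moreover, $f$ has finite cohomological dimension $d$: locally on $Y$ (affine $V$), $f^{-1}(V)$ is covered by finitely many affines with affine intersections, and the Čech complex bounds $d$.

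Third, we handle bounded complexes. For $E\in\Der^b_{\mathrm{coh}}(X)$, we induct on the number of nonzero cohomology sheaves using truncation triangles
\[
\tau_{<n}E\to E\to \cat H^n(E)[-n]\to.
\]
This reduces to a single coherent sheaf $\cat F$, for which $\cat H^i(f_*\cat F)=R^if_*\cat F$ is coherent by Grothendieck's theorem and vanishes for $i>d$. Coherent sheaves form a Serre (weak Serre, abelian) subcategory of quasi-coherent sheaves, so the long exact sequences show that $\Der^b_{\mathrm{coh}}$ is closed under extensions and the induction goes through.

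Finally, for $E$ bounded above, we fix a degree $i$ and use the triangle
\[
\tau_{<m}E\to E\to\tau_{\ge m}E\to
\]
with $m\ll i$. The term $f_*\tau_{<m}E$ has cohomology only in degrees $<m+d$. We need this claim: $f_*$ of a complex concentrated in degrees $<m$ lies in degrees $<m+d$, even for unbounded-below complexes. We prove it via the finite cohomological dimension, for instance by computing $Rf_*$ with Čech resolutions on a finite affine cover, or by the way-out lemma (Hartshorne, Residues and Duality, I.7). Hence for $m<i-d$ we get $\cat H^i(f_*E)\cong\cat H^i(f_*\tau_{\ge m}E)$, which is coherent by the bounded case. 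Boundedness above of $f_*E$ is immediate.

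The main obstacle is the classical input itself, the coherence of $R^if_*\cat F$ for proper $f$. If we cannot simply cite it, we would prove it by Chow's lemma and noetherian induction. First we treat projective morphisms via Serre's computation of $H^i(\mathbb P^n_A,\cat O(k))$ and resolutions by twists $\cat O(-k)^{\oplus r}$, using descending induction on $i$. Then we reduce the proper case to the projective one using a projective birational $g\colon X'\to X$ with $f\circ g$ projective, together with the fact that the cone of $\cat F\to g_*g^*\cat F$ is supported on a smaller closed subset. The unbounded way-out step is routine by comparison.
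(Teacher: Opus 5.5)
Your proof is correct, but the paper gives no proof to compare it with: \cref{thm:grothendieck} is quoted as a classical result, citing EGA~III, Th\'eor\`eme~3.2.1 for coherent sheaves and SGA~6 for complexes, and is then only used.

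What you supply is the standard reduction of the complex-level statement to coherence of the sheaves $R^if_*\cat F$. Its decisive step is the same argument the paper gives in \cref{lem:preserves-iff}, with the bound taken from \cref{rec:bounded}. There, bounded complexes are handled by induction on the number of cohomology sheaves. The left tail of a right-bounded complex is then cut off using the right $t$-boundedness $f_*(\Der^{\le 0}(X))\subseteq \Der^{\le d}(Y)$.

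The citation costs nothing. Your route is self-contained modulo EGA~III, or modulo Serre's computation on $\mathbb{P}^n$ plus Chow's lemma. It relies throughout on the noetherian identification $\Dps=\Dpcoh$, which is all the statement needs.

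Two minor points.

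First, the way-out lemma of \emph{Residues and Duality} does not prove the degree bound for unbounded-left complexes. Being ``way-out right'' is exactly that bound, so there it is an input, not an output. Any of the following works instead:
\begin{enumerate}
\item the \v{C}ech complex of a finite affine cover of $f^{-1}(V)$ with affine intersections, which has finite length and so bounds degrees even for unbounded complexes;
\item writing the complex as the homotopy colimit of its brutal truncations and using that $f_*$ commutes with coproducts on $\Derqc$;
\item simply citing Lipman, as the paper does.
\end{enumerate}

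Second, in the Chow fallback, ``birational'' should read ``an isomorphism over a dense open $U\subseteq X$''; Chow's lemma provides this without any integrality assumption. The comparison of $f_*g_*$ with $(fg)_*$ must also be done derivedly, in one of two ways:
\begin{enumerate}
\item Use $Lg^*\cat F$. This is merely pseudo-coherent, so you need the projective case on all of $\Dpcoh$, which your d\'evissage provides.
\item Keep $g^*$ underived, and note that the sheaves $R^qg_*g^*\cat F$ for $q>0$ are also supported on $X\setminus U$.
\end{enumerate}
Coherent sheaves supported on $X\setminus U$ are pushed forward from closed subschemes with that underlying space. Noetherian induction on the support then closes the argument.
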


In this noetherian context, pseudo-coherent complexes are simply the bounded above complexes of coherent sheaves. See~\cref{sec:preliminaries} for further details.

For many years it was not known --- or, at least, not widely known --- that the converse of this Serre--Grothendieck finiteness theorem is also true. In other words, preservation of pseudo-coherence actually characterizes properness. This result is due to Lipman; a sketch of a proof is given in \mbox{\cite[(4.3.9)]{Lipman09}}. In this paper, we provide a proof of Lipman's result and augment it with a new characterization of properness. More precisely, we prove the following:

\begin{Thm}\label{thm:main}
	Let $f:X \to Y$ be a finite-type separated morphism of noetherian schemes and let $f_*:\Der(X)\to \Der(Y)$ denote the derived pushforward. The following are equivalent:
	\begin{enumerate}
		\item $f$ is a proper morphism;
		\item $f_*$ preserves pseudo-coherent complexes;
		\item $\Dps(X) = \SET{a \in \Der(X)}{f_*(a \otimes b)\in \Dps(Y) \text{ for all } b\in \Dperf(X)}$.
	\end{enumerate}
\end{Thm}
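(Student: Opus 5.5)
We will prove the implications $(1)\Rightarrow(3)\Rightarrow(2)\Rightarrow(1)$.

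For $(1)\Rightarrow(3)$: if $f$ is proper and $a\in\Dps(X)$, then $a\otimes b\in\Dps(X)$ for every perfect $b$, since pseudo-coherent complexes are closed under tensoring with perfect ones. Hence $f_*(a\otimes b)\in\Dps(Y)$ by \cref{thm:grothendieck}. This gives the inclusion $\subseteq$.

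For $\supseteq$, suppose $f_*(a\otimes b)$ is pseudo-coherent for all $b\in\Dperf(X)$; we must show $a$ is pseudo-coherent. Pseudo-coherence is local, and $Y$ may be replaced by an affine open (flat base change commutes with $f_*$ and preserves pseudo-coherence), so we may assume $Y=\Spec R$. Cover $X$ by finitely many affines $U$. For each point $x$ we want to detect the cohomology of $a$ near $x$ using a suitable perfect complex $b$. Take $b$ to be a Koszul complex supported on a closed subset $Z$, or, better, use the Thomason--Neeman fact that $\Der(X)$ is compactly generated by $\Dperf(X)$ together with $\Hom(b,a)=\Gamma(X,b^\vee\otimes a)=\Gamma(Y,f_*(b^\vee\otimes a))$. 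The hypothesis then says that $\mathrm{R}\Hom(b,a)$ is a pseudo-coherent $R$-complex for every perfect $b$. Since $X\to Y$ is proper, the standard twisting argument applies: choose $b=\cat O(-n)$ for an $f$-ample line bundle when $f$ is projective, and reduce the general case to the projective one via Chow's lemma and noetherian induction. From $\Gamma(X,a(n))$ being pseudo-coherent for all $n$, together with a truncation argument handled one cohomological degree at a time from the top, we deduce that the cohomology sheaves of $a$ are coherent, using Serre's description of coherent sheaves via graded modules. The Chow reduction is delicate, so as an alternative we may cite that, for proper $f$, the category $\Dps(X)$ is characterized as the complexes $a$ with $\Hom(b,a)$ of finite type over $R$ in each degree and bounded above. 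This is the approximability-type statement of Neeman, which can be invoked if it is permitted.

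For $(3)\Rightarrow(2)$: take $a$ pseudo-coherent and $b=\cat O_X$; the inclusion $\subseteq$ in (3) gives that $f_*a$ is pseudo-coherent.

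For $(2)\Rightarrow(1)$, the main obstacle: this is Lipman's converse. By Nagata compactification, factor $f=\bar f\circ j$ with $j:X\hookrightarrow\bar X$ an open immersion with dense image and $\bar f$ proper. The goal is to show $X=\bar X$. Suppose $x\in\bar X\setminus X$ and consider $\cat O_X=j^*\cat O_{\bar X}$. Then $\bar f_* j_*\cat O_X=f_*\cat O_X$ is pseudo-coherent by hypothesis. We want to conclude that $j_*\cat O_X$ is pseudo-coherent, i.e.\ that $H^0(j_*\cat O_X)$ is coherent. Plain $\bar f_*$ does not reflect this, so twist: for an ample, or $\bar f$-ample, line bundle $L$ on $\bar X$ (again reducing to the projective case via Chow's lemma), $f_*(L^n|_X)=\bar f_*(j_*\cat O_X\otimes L^n)$ is pseudo-coherent for all $n$, and hence $j_*\cat O_X$ has coherent cohomology by Serre's theorem. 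Once $j_*\cat O_X$ is known to be coherent, we localize at a generic point of $\bar X\setminus X$ and reduce to a local ring $A$ of dimension at least $1$ with punctured spectrum $U$. There $\Gamma(U,\cat O)$ is finite over $A$, so its degree-one local cohomology $H^1_{\frakm}$ would be finitely generated and $\frakm$-torsion, hence of finite length. We then derive a contradiction with the grade bound, using $H^{d}_\frakm(A)\neq0$ for $d=\dim A$ and the fact that the higher cohomology $H^{i}(U,\cat O)=H^{i+1}_\frakm(A)$ is not finitely generated when $d\ge1$, since top local cohomology is Artinian and nonzero. In the case $d=1$, $\Gamma(U,\cat O)$ is a localization that is not finite over $A$. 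Handling this step cleanly, especially avoiding an unwanted reduction to projective $\bar f$, is where I expect the real difficulty to lie.
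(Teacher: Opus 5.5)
Your easy implications are fine, and your skeleton for $(2)\Rightarrow(1)$ is the paper's: Nagata, then show $j_*\cat O_X$ is pseudo-coherent, then get a contradiction at a generic point of $\bar X\smallsetminus X$.

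\textbf{The gap.} The genuine gap is the inclusion $\supseteq$ in $(1)\Rightarrow(3)$ when $f$ is proper but not projective. Your $(2)\Rightarrow(1)$ rests on the same step, because a general $\bar f$ has no relatively ample line bundle. Once $(1)\Rightarrow(3)$ is known you should simply apply it to $\bar f$, using $\bar f_*(j_*\cat O_X\otimes b)\simeq f_*(j^*b)$. The phrase ``Chow's lemma and noetherian induction'' hides two real obstacles.

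First, for $g\colon X'\to X$ as in Chow's lemma, $(fg)_*(g^*a\otimes c)\simeq f_*(a\otimes g_*c)$. Here $g_*c$ is only pseudo-coherent, so a hypothesis about perfect twists does not apply. The paper proves the characterization with pseudo-coherent twists (\cref{thm:proper-pseudo}) and separately shows that perfect twists suffice (\cref{lem:whatever-twist}). That lemma uses a uniform bound $f_*(a\otimes\Der^{\le n}(X))\subseteq\Der^{\le n-k}(Y)$, obtained from left-boundedness of $\ihom{a,f^!(I)}$ for an injective cogenerator $I$, together with approximation of pseudo-coherent complexes by perfect ones.

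Second, and more seriously, knowing $g^*a\in\Dps(X')$ you must descend to $a\in\Dps(X)$. This is not formal. The obvious d\'evissage over the exceptional locus would need pseudo-coherence to be reflected along closed immersions, and that fails: the injective hull $E$ of $k$ over $k[x]$ is not pseudo-coherent, yet $E\dotimes_{k[x]}k\simeq\Sigma k$ is. The paper's key new input is \cref{prop:reflects-pseudo}. Since $g_*\cat O_{X'}$ has full support, irreducibility of the fibers of $\Spc(\Dps(X))\to X$ (\cref{thm:fibers-irreducible}, \cref{prop:virtually-small}) gives a pseudo-coherent $b$ with $\cat O_X\in\thicksub{g_*\cat O_{X'}\otimes b}$. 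Hence $a\in\thicksub{g_*g^*a\otimes b}\subseteq\Dps(X)$. Nothing in your proposal plays this role.

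\textbf{The Neeman fallback.} As written it does not close the gap: over an affine base, the statement you attribute to Neeman is literally criterion $(3)$. It can become a genuinely different proof, essentially Rossanigo's route mentioned in the introduction, if you do two things. Cite the actual representability theorem: for $X$ proper over a noetherian $R$, every locally finite $R$-linear homological functor on $\Dperf(X)^{\mathrm{op}}$ is $\Hom(-,a')$ restricted to $\Dperf(X)$ for some $a'\in\Dps(X)$. Then prove $a'\simeq a$: write $a'$ as a homotopy colimit of perfect complexes, lift the natural isomorphism to a map $a'\to a$ via the Milnor sequence, and conclude by compact generation.

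\textbf{Smaller points.} Your projective-case sketch does not work as described. You first need $a$ bounded on the right (\cref{rec:perfect-generator}). Also, a top-down truncation isolates only the top one or two groups $H^p(X,\mathcal H^t(a)(n))$, and these do not detect coherence (take $\bigoplus_{m\ge0}\cat O(m)$ on $\mathbb{P}^2$ over a field). Beilinson's resolution, giving $a\in\thicksub{d_i\otimes f^*f_*(a\otimes c_i)}$, avoids this entirely.

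In the last step of $(2)\Rightarrow(1)$ you need all $R^ij_*\cat O_X$ coherent, not just $H^0$; for a normal surface singularity $\Gamma(U,\cat O_U)=A$ is finite. Also, ``Artinian and nonzero'' does not imply ``not finitely generated''. It is simpler to note that $R\Gamma(U,\cat O_U)\dotimes_A k\simeq0$, so Nakayama (\cref{rem:pseudo-nak}) would force $R\Gamma(U,\cat O_U)=0$. This is essentially \cref{lem:open-quasi-proper}.
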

\noindent
Here $\Dps(X)$ denotes the category of pseudo-coherent complexes and~$\Dperf(X)$ denotes the category of perfect complexes. There are also equivalent versions of criteria $(b)$ and $(c)$ formulated in terms of bounded coherent complexes rather than pseudo-coherent complexes; see \cref{thm:main-body} and \cref{thm:dbcoh-version}. The new characterization $(c)$ is delicate: it fails if we remove the twisting by the object~$b$. In other words, the derived pushforward of a proper morphism does not reflect pseudo-coherence in general; see \cref{exa:no-reflect}.

The proof of \cref{thm:main} combines classical algebro-geometric tools --- specifically, Nagata's Compactification Theorem and Beilinson's Resolution of the Diagonal --- with general tools from tensor triangular geometry. In particular, we study the Balmer spectrum of the derived category of pseudo-coherent complexes and prove that for any quasi-compact and quasi-separated scheme~$X$, the fibers of the canonical map \begin{equation}\label{eq:intro-canonical} \Spc(\Dps(X)) \to X \end{equation} are irreducible. See~\cref{thm:fibers-irreducible}.

The irreducibility of these fibers provides a clean way to study pseudo-coherence under pullbacks. In general, the thick subcategory generated by a pseudo-coherent complex need not contain any nonzero perfect complex --- that is, a pseudo-coherent complex need not be ``virtually small'' in the sense of~\cite{DwyerGreenleesIyengar06}. However, this fiber property implies that any pseudo-coherent complex can be ``twisted'' into a virtually small complex; see~\cref{prop:virtually-small}. This, in turn, yields an effortless proof that the derived pullback of a surjective proper morphism reflects pseudo-coherence; see \cref{prop:reflects-pseudo}. This is false for bounded coherent complexes, which again highlights the central role played by pseudo-coherence.

Overall, our approach to~\cref{thm:main} prioritizes general categorical and tensor triangular reasoning whenever applicable.
\[\ast\ast\ast\]
The author has learned that the implication $(a)\Rightarrow (c)$ in~\cref{thm:main} has also been obtained by Rossanigo~\cite[Corollary~1.0.4]{Rossanigo26pp} through the lens of representability theorems for functors defined on $\Dperf(X)$.

\section{Preliminaries}\label{sec:preliminaries}
We begin by setting some notation and recollecting some basic facts.

\begin{Not}
	For a collection of objects $\cat E$ in a tensor-triangulated category $\cat K$, we will write $\thickidsub{\cat E}$ for the thick tensor ideal generated by $\cat E$. A standard thick subcategory argument shows that 
	\begin{equation*}
		\thickidsub{\cat E} = \thicksub{\cat E \otimes \cat K}.
	\end{equation*}
\end{Not}

\begin{Rec}\label{rec:nilpotence}
	For any morphism $\eta: \unit \to b$ in a tensor-triangulated category $\cat K$, we have an associated exact triangle
	\begin{equation}\label{eq:nilpotence-triangle}
		w \xrightarrow{\xi} \unit \xrightarrow{\eta} b \to \Sigma w
	\end{equation}
	and $\Nil(\xi) \coloneqq \SET{a \in \cat K}{\xi^{\otimes n} \otimes a = 0 \text{ for some } n \ge 1}$ is a thick tensor ideal. This is~\cite[Proposition~2.9]{Balmer18}. 
\end{Rec}

\begin{Conv}\label{conv:qcqs}
	All of our schemes $X$ will be quasi-compact and quasi-separated. We will sometimes assume $X$ is noetherian, but will always be explicit about this hypothesis.
\end{Conv}

\begin{Rem}
	We will write $\Der(X) \coloneqq \Derqc(X)$ for the derived category of complexes of $\OX$-modules whose cohomology modules are quasi-coherent. It is a rigidly-compactly generated tensor-triangulated category whose compact objects (=dualizable objects) are precisely the perfect complexes. These form a tensor-triangulated subcategory
	\[
		\Dperf(X) \subset \Der(X).
	\]
	The original scheme can be recovered as the Balmer spectrum of this category: $X \cong \Spc(\Dperf(X))$. Under this identification, a point $x \in X$ corresponds to the Balmer prime $\cat P_x \coloneqq \Ker(\Dperf(X) \to \Dperf(\OXx))$. See~\cite{BalmerICM}. We will simply write~$\otimes$ for the derived tensor product in $\Der(X)$ and, when convenient, write $\unit_X= \OX$ for the tensor unit. The internal hom will be denoted~$\ihomname$.
\end{Rem}

\begin{Not}
	For any morphism $f:X\to Y$, we will write $f^*:\Der(Y)\to\Der(X)$ for the derived pullback and $f_*:\Der(X)\to\Der(Y)$ for the derived pushforward. Moreover, we will write $f^!:\Der(Y)\to\Der(X)$ for the right adjoint of $f_*$. These adjoints are related by numerous formulas, detailed in \cite{BalmerDellAmbrogioSanders16}. In particular, we have the projection formula: $f_*(a\otimes f^*(b)) \simeq f_*(a)\otimes b$ for any $a \in \Der(X)$ and $b \in \Der(Y)$. Since~$f^*$ is (strong) symmetric monoidal, it preserves perfect complexes, simply because they are the dualizable objects.
\end{Not}

\begin{Rec}\label{rec:bounded}
	The derived pullback $f^*$ is right $t$-exact and the derived pushforward~$f_*$ is left $t$-exact, meaning $f^*(\Der^{\le 0}(Y)) \subseteq \Der^{\le 0}(X)$ and $f_*(\Der^{\ge 0}(X)) \subseteq \Der^{\ge 0}(Y)$. Although $f_*$ is not right $t$-exact (except when $f$ is affine), it is always right \mbox{$t$-bounded} meaning $f_*(\Der^{\le 0}(X)) \subseteq \Der^{\le d}(Y)$ for some fixed $d \ge 0$. See \cite[Proposition~3.9.2]{Lipman09}. In contrast, $f^*$ is left \mbox{$t$-bounded} only for very special morphisms (namely, those of finite Tor-dimension). 
\end{Rec}

\begin{Rem}
	We write $\Dbcoh(X)$ and $\Dps(X)$ for the categories of bounded coherent complexes and pseudo-coherent complexes, respectively. While the general definition of pseudo-coherent complex is somewhat technical, it will suffice for our purposes to understand the affine case and the noetherian case, which we recollect below. The general definition is discussed in~\cite[Section~2]{SGA6-expose-I}, \cite[Section~2]{ThomasonTrobaugh90} and \cite[Sections~\href{https://stacks.math.columbia.edu/tag/064N}{064N},~\href{https://stacks.math.columbia.edu/tag/08CA}{08CA} and~\href{https://stacks.math.columbia.edu/tag/08E4}{08E4}]{stacks-project}.
\end{Rem}

\begin{Rem}\label{rem:affine-pseudo}
	Let $R$ be a commutative ring. A complex $E\in \Der(R)$ is pseudo-coherent if it is quasi-isomorphic to a complex $P$ of finitely generated projective modules that is strictly bounded on the right. Moreover, by starting at the right and successively removing contractible summands, we can always replace~$P$ by a homotopy equivalent complex that satisfies
	\begin{equation}\label{eq:top-homology}
		\max\SET{i}{P^i \neq 0} = \max\SET{i}{ H^i(P) \neq 0}\eqqcolon m.
	\end{equation}
	This makes it clear that the rightmost cohomology group
	\[
		H^m(E) = \coker(P^{m-1}\to P^{m})
	\]
	is finitely presented. Over a non-noetherian ring, $H^i(E)$ need not be finitely generated for $i < m$.
\end{Rem}

\begin{Exa}
	Let $R=k \ltimes V$ be the trivial square-zero extension of a field~$k$ by an infinite-dimensional $k$-vector space $V$. This ring is not coherent by \mbox{\cite[Thm.~2.6]{KabbajMahdou04}} and for any nonzero $v \in V$, the perfect complex
	\[
		P=(R \xrightarrow{ (0,v) } R)
	\]
	has an infinitely generated $H^{-1}(P) = 0 \ltimes V$.
\end{Exa}

\begin{Rem}\label{rem:pseudo-nak}
	If $(R,\frakm)$ is a local ring then we can always choose the complex~$P$ representing~$E$ to satisfy $d^i(P^i) \subseteq \frakm P^{i+1}$ for all $i$. This is achieved by starting at the right and successively removing contractible summands ${(\id:R \to R)}$ associated to units in the matrices representing the differentials; cf.~\cite[Lemma~\href{https://stacks.math.columbia.edu/tag/00MT}{00MT}]{stacks-project}. It follows that the complex $P\otimes_R R/\frakm$ has zero differentials. Hence $H^i({E\dotimes_R R/\frakm}) = H^i({P\otimes_R R/\frakm}) = {P^i \otimes_R R/\frakm}$. In particular, if $E\dotimes_R R/\frakm = 0$ then ${E=0}$. For a nonzero such complex, right-exactness at the top degree~$m$ gives ${H^m(P)\otimes_R R/\frakm} = H^m({P\otimes_R R/\frakm}) = {P^m \otimes_R R/\frakm}$ which is nonzero, so \eqref{eq:top-homology} holds.
\end{Rem}

\begin{Rem}\label{rem:noetherian-pseudo}
	For a noetherian scheme $X$, the pseudo-coherent complexes are precisely those complexes whose cohomology modules are coherent and bounded on the right:\footnote{The author has a strong personal preference for saying bounded on the left and bounded on the right rather than bounded below and bounded above. The former terminology is superior because it is agnostic about whether you are using homological or cohomological grading for complexes. The author was impressed by the use of $\sqsubset$ and $\sqsupset$ in \cite[Appendix~A]{Christensen00}. It is far clearer than using $+$ and $-$. Not only do the latter symbols need to be swapped depending on the grading convention, but their meaning is also far from obvious.}
	\begin{equation}\label{eq:Dps-as-Dpcoh}
		\Dps(X) = \Dpcoh(X).
	\end{equation}
	Moreover, the bounded coherent complexes are precisely the pseudo-coherent complexes that are bounded on the left:
	\begin{equation}\label{eq:Dbcoh-as-Dpsb}
		\Dbcoh(X) = \Dps(X) \cap \Dminus(X).
	\end{equation}
\end{Rem}

\begin{Rem}\label{rem:globally-bounded}
	More generally, every pseudo-coherent complex over a quasi-compact scheme is bounded on the right, $\Dps(X)\subseteq \Der^{\sqsupset}(X)$, and the rightmost cohomology module is always finitely presented as an $\OX$-module. This follows from \cref{rem:affine-pseudo}.
\end{Rem}

\begin{Rem}\label{rem:pseudo-tt}
	The pseudo-coherent complexes form a tensor-triangulated subcategory
	\[
		\Dps(X) \subseteq \Der(X)
	\]
	and for any morphism $f:X \to Y$, the derived pullback $f^*:\Der(Y)\to\Der(X)$ always preserves pseudo-coherent complexes. In contrast, $\Dbcoh(X)$ is only closed under the tensor-product when $X$ is regular, which is precisely when it coincides with~$\Dperf(X)$. However, we always have $\Dbcoh(X) \otimes \Dperf(X) \subseteq \Dbcoh(X)$.
\end{Rem}

\begin{Rec}\label{rec:perfect-generator}
	Since our schemes are quasi-compact and quasi-separated, the derived category $\Der(X)$ always admits a perfect generator $g\in\Dperf(X)$ by \cite{BondalVandenBergh03}. A useful fact is that the internal hom $\ihom{g,-}$ reflects left-bounded complexes: if $\ihom{g,a} \in \Der^{\sqsubset}(X)$ then $a \in \Der^{\sqsubset}(X)$. This is just a consequence of the fact that $\Der^{\sqsubset}(X)$ is a thick subcategory, so that $\cat C_a \coloneqq \SET{c \in \Dperf(X)}{\ihom{c,a}\in \Der^{\sqsubset}(X)}$ is a thick subcategory. Hence, if $\cat C_a$ contains $g$ then it contains $\unit$. The same argument shows that $\ihom{g,-}$ also reflects right-bounded complexes. Furthermore, if $p:X\to\Spec(\bbZ)$ denotes the structure morphism then it is also true that $p_*\ihom{g,-}:\Der(X) \to \Der(\bbZ)$ reflects left-bounded (respectively, right-bounded) complexes. This is slightly less formal, however; see \cite[Section~\href{https://stacks.math.columbia.edu/tag/0GEI}{0GEI}]{stacks-project}.	
\end{Rec}

\begin{Rec}\label{rem:approximation}
	A pseudo-coherent complex $b\in\Dps(X)$ can be ``approximated'' by perfect complexes as follows. For each $m \in \bbZ$ there is an exact triangle
	\[
		c \to b \to d \to \Sigma c
	\]
\enlargethispage{1\baselineskip}
	with $c \in \Dperf(X)$ and $d \in \Der^{\le m}(X)$. This is proved in \cite[Section~\href{https://stacks.math.columbia.edu/tag/08EL}{08EL}]{stacks-project}.
\end{Rec}

\section{The spectrum of pseudo-coherent complexes}

We may consider the Balmer spectrum $\Spc(\Dps(X))$ of the category of pseudo-coherent complexes. The inclusion $\Dperf(X) \hookrightarrow \Dps(X)$ induces a map
\[
	\varphi:\Spc(\Dps(X)) \to \Spc(\Dperf(X))\cong X
\]
and our present goal is to prove that its fibers are irreducible; see \cref{thm:fibers-irreducible}.

\begin{Def}\label{def:support}
	We can define the support of a pseudo-coherent complex~$E$ by
	\begin{align}
		\Supp(E) &= \SET{x \in X}{E_x \neq 0 \text{ in } \Der(\OXx)}.\label{eq:support}
	\intertext{This is specialization closed and it follows from \cref{rem:pseudo-nak} that we have an equality}
		\Supp(E) &= \SET{x \in X}{E \dotimes_{\OX} k(x) \text{ in } \Der(k(x))}.\label{eq:h-support}
	\end{align}
	This defines a support theory on $\Dps(X)$ which satisfies the detection property and the tensor-product property.\footnote{The detection property comes from the definition in terms of stalks. The tensor-product property comes from the characterization using the residue fields.} Consequently, the formula 
	\[
		\tame(x) \coloneqq \SET{E \in \Dps(X)}{x \not\in\Supp(E)}
	\]
	provides a well-defined function
	\[
		\tame:X \to \Spc(\Dps(X)).
	\]
	See \cite[Theorem~3.2]{Balmer05a} and \cite[Construction~5.12]{SandersZhang26} for further details. This is the unique function such that 
	\begin{equation}\label{eq:tau-inverse}
		\tau^{-1}(\supp(E)) = \Supp(E)
	\end{equation}
	where $\supp(E) \subseteq \Spc(\Dps(X))$ is the universal Balmer support of the pseudo-coherent complex $E \in \Dps(X)$.
\end{Def}

\begin{Rem}
	By construction we have a set-theoretic splitting
	\[\begin{tikzcd}
		X \ar[r,"\tau"'] \ar[rr,bend left=20,"\id"] & \Spc(\Dps(X)) \ar[r,"\varphi"'] & X.
	\end{tikzcd}\]
	We will show that the map $\tame$ provides the generic points of the fibers of $\varphi$.
\end{Rem}

\begin{Lem}\label{lem:koszul}
	Let $R$ be a commutative ring and let $I=(a_1,\ldots,a_n)$ be a finitely generated ideal. The Koszul complex $K\coloneqq \bigotimes_{i=1}^n \cone(a_i)$ is contained in the thick ideal generated by $R/I$.
\end{Lem}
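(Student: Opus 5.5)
We must show $K \in \thickidsub{R/I}$ in $\Der(R)$, which by the Notation equals $\thicksub{R/I \otimes \Der(R)}$. The plan is to use \cref{rec:nilpotence}, applied to one element $a_i$ at a time, and then induct on $n$.

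Fix one element $a \in R$ and consider the exact triangle $R \xrightarrow{a} R \to \cone(a) \to \Sigma R$. Multiplication by $a$ on $\cone(a)$ is nullhomotopic up to a square, and $\cone(a)$ lies in $\thickidsub{R/(a)}$ when $a$ is a nonzerodivisor, but in general it does not. So a more robust route is needed: argue instead via the support-type statement that $\Nil$-ideals are thick tensor ideals.

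Concretely, let $\cat J = \thickidsub{R/I}$ and work with the Verdier quotient, or directly as follows. For each $i$, multiplication by $a_i$ on $R/I$ is zero. Write $\xi_i : \Sigma^{-1}\cone(a_i) \to R$ for the fiber map of $\eta_i : R \to \cone(a_i)$ in the triangle \eqref{eq:nilpotence-triangle}. Then $\xi_i \otimes c$ is, up to a shift, the map $a_i : c \to c$ composed appropriately. More precisely, $\cone(\xi_i\otimes c)\simeq c\otimes \cone(a_i)$-type relations show that $\xi_i\otimes K$ is zero up to a small power, since $a_i$ acts nilpotently (with square zero) on $\cone(a_i)$.

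The key point is that $R/I \otimes \Sigma^{-1}\cone(a_i) \to R/I$ is split, because $a_i$ acts by zero on $R/I$. Consequently $\eta_i \otimes R/I$ is split mono, and the object $\cone(a_i)\otimes R/I \simeq R/I \oplus \Sigma R/I$ lies in $\cat J$.

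I would then do induction. Set $K_j = \bigotimes_{i\le j}\cone(a_i)$; we know $K_n \in \thickidsub{R/I}$ precisely when the following holds. Consider the thick ideal $\cat N=\SET{c}{K\otimes c\in\cat J}$, which is a thick tensor ideal. It suffices to show $R\in\cat N$, i.e.\ $K \in \cat J$.

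The standard argument runs as follows. $R/I$ is built from $K$ through its homology, since $H_0(K)=R/I$. Conversely, $K$ is finite and its homology modules are $R/I$-modules, i.e.\ annihilated by $I$. Each $a_i$ acts by zero up to homotopy on $K$ (it is nullhomotopic on $\cone(a_i)$). Hence $K$ is a module over $R$ on which every $a_i$ is null, and this yields $K\in\thickidsub{K\otimes R/I}$ by a Koszul-type retract: $K$ is a retract of $K\otimes \cone(a_1)\otimes\cdots\otimes\cone(a_n) = K\otimes K$. More directly, since $a_i\simeq 0$ on $K$, we have $K\otimes\cone(a_i)\simeq K\oplus\Sigma K$.

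Now the cleanest route is to replace $K$ by its action: build $R/I$ from $R$ by iterated cones of the $a_i$ on modules $R/(a_1,\dots,a_{j})$, then tensor with $K$. We have $K\otimes R/(a_1,\ldots,a_{j-1})\xrightarrow{a_j}\cdots$, but we need the converse direction.

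Final plan, which I would carry out: use $\Nil(\xi)$ with $\eta:R\to K$, the Koszul unit map. Then $\Nil(\xi)$ is a thick tensor ideal, and the target claim is $\Nil(\xi)\subseteq\cdots$. Instead, we have $w\otimes R/I\to R/I$ zero, because $R/I\to K\otimes R/I$ is split mono ($a_i=0$ on $R/I$). By the octahedral axiom, $K\otimes w^{\otimes m}$-filtrations place $K$ in $\thickidsub{K\otimes R/I}$ provided $\xi^{\otimes m}\otimes K=0$. This holds with $m=n+1$ or so, since $\xi\otimes K=0$ factor by factor. Finally, $K\otimes R/I\in\cat J$, so $K\in\cat J$.

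The main obstacle is the nilpotence step $\xi^{\otimes m}\otimes K=0$ and making the filtration argument precise. Via \cref{rec:nilpotence} this reduces to $\eta\otimes K$ being split mono, which follows from the $a_i$ being null on $K$.
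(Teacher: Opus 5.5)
\textbf{There is a genuine gap: your final plan argues in the wrong direction.} You take $\xi$ to be the fiber of the Koszul unit $\eta\colon R\to K$. The mechanism of \cref{rec:nilpotence} then only produces membership in ideals generated by $\cone(\xi)\otimes(-)=K\otimes(-)$: if $\xi^{\otimes m}\otimes c=0$, then $c$ is a summand of $\cone(\xi^{\otimes m})\otimes c\in\thickidsub{K\otimes c}$.

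Your two observations therefore give the following. From $\xi\otimes R/I=0$ you get $R/I\in\thickidsub{K}$, which is the easy converse. From $\xi\otimes K=0$ you get only the tautology $K\in\thickidsub{K\otimes K}$. Nothing here places $K$ in $\thickidsub{K\otimes R/I}$.

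For that you would need $\xi'\colon I\to R$, the fiber of $R\to R/I$, to satisfy $\xi'^{\otimes m}\otimes K=0$. This does not hold ``factor by factor.'' Take $R=k[x]/(x^2)$ and $a=x$. The map $K\to k\dotimes_R K\simeq k\oplus\Sigma k$ is zero on $H^{-1}(K)=(x)\neq 0$, so it is not split mono and $\xi'\otimes K\neq 0$. Moreover, $R/I$ is a ring object, so $\Nil(\xi')=\thickidsub{R/I}$. The nilpotence you need is thus equivalent to the lemma itself, and your argument is circular. The paper in fact uses this lemma later to deduce exactly that nilpotence, in \cref{lem:local-triangle}.

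\textbf{The missing idea is the route you mentioned in the middle and then abandoned.} It has three steps.
\begin{enumerate}
\item Multiplication by $a$ is nullhomotopic on $\cone(a)$, on the nose and not just up to a square. Hence $a_j\otimes K=0$ for every $j$, and $I$ annihilates every $H^i(K)$.
\item $K$ is bounded, so its Postnikov tower places it in the thick subcategory generated by its cohomology modules.
\item Every $R/I$-module $M$ lies in $\thickidsub{R/I}$. The map $M\to R/I\dotimes_R M$ is split by the action map $R/I\dotimes_R M\to R/I\otimes_R M\cong M$, so $M$ is a retract of an object of the ideal.
\end{enumerate}
The third step is where the tensor-ideal closure, rather than mere thick closure, is used. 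This is the paper's proof.

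Your stated reason for abandoning this route is also false. You claimed that $\cone(a)\notin\thickidsub{R/(a)}$ when $a$ is a zero-divisor, but this contradicts the case $n=1$ of the lemma. In the example above, $\cone(x)$ sits in an exact triangle $\Sigma k\to\cone(x)\to k\to\Sigma^2k$, so it lies even in $\thicksub{k}$.
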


\begin{proof}
	Since the inclusion $\Mod(R) \hookrightarrow \Der(R)$ is lax symmetric monoidal, any \mbox{$R/I$-module} $M$ is a module for the ring object $R/I\in\Der(R)$ when regarded as an object in~$\Der(R)$. Hence the canonical map $M \to R/I\dotimes_R M$ is a split monomorphism in the derived category. In particular, any $R/I$-module is contained in the thick ideal generated by $R/I$. Next recall that for any $a\in R$, the morphism $a:\unit\to\unit$ in $\Der(R)$ is annihilated when tensored with $\cone(a)$. Hence $a_j\otimes K = 0$ for each $j=1,\ldots,n$. It follows that $IH^i(K)=0$ for each $i \in \bbZ$. Thus each $H^i(K)$ is an $R/I$-module. By induction on the length of the Postnikov tower, any bounded complex lies in the thick subcategory generated by its cohomology modules. We conclude that $K$ is contained in the thick ideal of $\Der(R)$ generated by $R/I$.
\end{proof}

\begin{Lem}\label{lem:local-triangle}
	Let $(R,\frakm)$ be a commutative local ring and let $E \in \Dps(R)$ be a nonzero pseudo-coherent complex. There exists an exact triangle
	\[
		W \xrightarrow{\xi} R \xrightarrow{\alpha} \Sigma^m E \to \Sigma W
	\]
	and a nonzero perfect complex $K \in \Dperf(R)$ such that $\xi^{\otimes n}\otimes K = 0$ for some $n \ge 1$.
\end{Lem}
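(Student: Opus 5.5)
Represent $E$ by a minimal complex $P$ of finitely generated free $R$-modules as in \cref{rem:pseudo-nak}. Then $d^i(P^i)\subseteq \frakm P^{i+1}$ for all $i$, and the top degree $m$ satisfies \eqref{eq:top-homology}, so $P^m\neq 0$ and $H^m(E)=\coker(P^{m-1}\to P^m)$. Pick a basis element of $P^m$ that survives modulo $\frakm$. This defines a map $R\to P^m$, i.e.\ a cocycle in degree $m$. Up to the sign convention for $\Sigma$, it gives a morphism $\alpha:R\to\Sigma^m E$ in $\Der(R)$. Complete $\alpha$ to an exact triangle $W\xrightarrow{\xi}R\xrightarrow{\alpha}\Sigma^mE\to\Sigma W$, as in \cref{rec:nilpotence}. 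By that recollection, $\Nil(\xi)$ is a thick tensor ideal of $\Der(R)$, hence also of $\Dperf(R)$, so it suffices to find a single nonzero perfect complex in $\Nil(\xi)$.

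The key claim is that $\alpha\otimes k$ is a split monomorphism, where $k=R/\frakm$. By \cref{rem:pseudo-nak}, $P\otimes_R k$ has zero differentials, so $E\dotimes_R k\simeq\bigoplus_i \Sigma^{-i}(P^i\otimes k)$. The map $\alpha\otimes k: k\to \Sigma^m E\otimes k$ is the inclusion of the chosen nonzero basis vector into the summand $P^m\otimes k$, which is split. Hence $\xi\otimes k=0$, since $\xi\otimes k$ is the fibre map of a split monomorphism. The vanishing of $\xi$ on the residue field is the easy part of the argument.

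The remaining step, which I expect to be the main obstacle, is to pass from $\xi\otimes k=0$ to the statement that some power of $\xi$ kills a nonzero perfect complex. Since $\frakm$ need not be finitely generated, I would proceed through finitely generated pieces. Choose a finitely generated ideal $I=(a_1,\dots,a_n)\subseteq\frakm$ large enough that the splitting already happens over $R/I$. Concretely, $I$ should contain the finitely many matrix entries of the differentials of $P$ in degrees near $m$, namely $d^{m-1}$ together with whatever is needed for $\Sigma^mE\otimes R/I$ to have $R/I$ as a direct summand. A cleaner choice, avoiding an analysis of $\Sigma^m E$ over $R/I$, is to use a finite truncation. Take the brutal truncation $\sigma^{\ge m-1}P$ and let $I$ be generated by the entries of $d^{m-1}$. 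Then the degree-$m$ basis vector splits off after tensoring with $R/I$, because the differential into degree $m$ vanishes mod $I$ and there is nothing beyond degree $m$. One then has to compare with $E$ itself. The map $\alpha$ factors through $\Sigma^m$ of the top part, so a retraction of $\alpha\otimes R/I$ can be built degreewise, again using that the image of $d^{m-1}$ lies in $IP^m$. This gives $\xi\otimes R/I=0$.

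Let $K=\bigotimes_{i=1}^n\cone(a_i)$ be the Koszul complex. By \cref{lem:koszul}, $K$ lies in $\thickidsub{R/I}\subseteq\Nil(\xi)$, because $R/I\in\Nil(\xi)$ with exponent $1$. The complex $K$ is perfect, and it is nonzero because $K\otimes k$ is an exterior algebra on $k^n$, using $a_i\in\frakm$. This produces the nonzero perfect $K$ with $\xi^{\otimes N}\otimes K=0$ for some $N\ge1$, as required.
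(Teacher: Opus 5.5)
Your proposal is correct and follows the paper's argument: a minimal free complex $P$, a map from $R$ hitting a basis vector in the top degree, a finitely generated ideal $I\subseteq\frakm$ over which $\alpha$ becomes split monic, and then \cref{lem:koszul} with \cref{rec:nilpotence} applied to the Koszul complex of $I$. The differences are cosmetic. The paper takes the smaller ideal $I=\im(f\circ d^{-1})$, cut out by a single coordinate functional $f$, whereas you take all entries of $d^{m-1}$; your preliminary detour through $\xi\otimes k=0$ and the brutal truncation is not needed.
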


\begin{proof}
	Choose a quasi-isomorphism $P \xrightarrow{\sim} E$ where $P$ is a right bounded complex of finitely generated free modules satisfying $d^i(P^i)\subseteq \frakm P^{i+1}$ for all $i$ as in~\cref{rem:pseudo-nak}. Replacing $E$ by a shift $\Sigma^m E$, we may assume that $P^0 \neq 0$ and $P^i = 0$ for $i>0$. Thus $P^0 \otimes_R R/\frakm$ is a nonzero vector space over $R/\frakm$. We can thus construct a nonzero linear map $P^0 \otimes_R R/\frakm \to R/\frakm$. This lifts to an $R$-linear map $f:P^0\to R$ and we have $f(P^0) \not\subseteq \frakm$ since $f \otimes_R R/\frakm$ is nonzero by construction. Thus $f(P^0)$ contains a unit of $R$. It follows that there exists $u \in P^0$ such that $f(u)=1$. We may then consider the morphism of complexes $\alpha: R \to P$ which in degree zero maps $1\in R$ to $u\in P^0$. Note that
	\[
		I\coloneqq \im(P^{-1} \xrightarrow{d^{-1}} P^0 \xrightarrow{f} R) \subseteq \frakm
	\]
	because $d^{-1} \otimes_R R/\frakm = 0$. Since $P^{-1}$ is finitely generated, the ideal $I$ is finitely generated. Moreover, the map $f\otimes_R R/I : P^0 \otimes_R R/I \to R/I$ defines a morphism of complexes $P\otimes_R R/I \to R/I$ which is split by $\alpha \otimes_R R/I$. We have thus constructed a map $R\to P\simeq E$ in $\Der(R)$ which becomes split monic after tensoring with $R/I$. Its fiber $\xi:W\to R$ thus satisfies $\xi \dotimes_R R/I=0$. It then follows from~\cref{lem:koszul} and~\cref{rec:nilpotence} that~$\xi$ acts tensor-nilpotently on any Koszul complex $K$ of the finitely generated ideal~$I$. Note that $K$ is nonzero since the ideal $I$ is proper.
\end{proof}

\begin{Lem}\label{lem:kill-pseudo-P}
	Let $\alpha:E \to C$ be a morphism from a pseudo-coherent complex $E$ to a perfect complex $C$. Let $\cat P \in \Spc(\Dps(X))$ be any prime ideal and let $x \coloneqq \varphi(\cat P)$ be its image. If $\alpha_x = 0$ in $\Dps(\OXx)$ then $\alpha_{\cat P}=0$ in $\Dps(X)/\cat P$.
\end{Lem}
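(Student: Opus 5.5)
The plan is to reduce to the case $C=\unit$ by duality, then use \cref{rem:approximation} to replace $E$ by a perfect complex, and finally use that the Verdier quotient $\Dperf(X)/\cat P_x$ detects vanishing at $x$, transported into $\Dps(X)/\cat P$ via $\cat P\cap\Dperf(X)=\cat P_x$. I have not closed the last step; see the third paragraph.

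\emph{Reduction to $C=\unit$.} Since $C$ is perfect it is rigid, with dual $C^\vee$. Morphisms $\alpha:E\to C$ correspond to morphisms $\hat\alpha:E\otimes C^\vee\to\unit$. The object $E\otimes C^\vee$ is again pseudo-coherent by \cref{rem:pseudo-tt}. This correspondence is natural, so it commutes with taking stalks at $x$ and with passing to the tensor-triangulated quotient $\Dps(X)/\cat P$, because the quotient functor is tensor. Hence we may assume $C=\unit$ and $\alpha:E\to\unit$ with $\alpha_x=0$.

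\emph{Reduction to a perfect source.} Since $\unit\in\Der^{\ge 0}(X)$, choose $m\le -2$ and take the triangle $c\xrightarrow{i}E\to d\to\Sigma c$ of \cref{rem:approximation}, with $c$ perfect and $d\in\Der^{\le m}(X)$. Then $\Hom(d,\unit)=0$ and $\Hom(\Sigma^{-1}d,\unit)=0$, so $i^*:\Hom(E,\unit)\to\Hom(c,\unit)$ is bijective. Localization at $x$ is $t$-exact, so the same triangle shows $i_x^*$ is bijective on $\Hom_{\OXx}(-,\OXx)$. Therefore $u\coloneqq\alpha\circ i:c\to\unit$ is a morphism of perfect complexes with $u_x=0$. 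Since $\Dperf(\OXx)$ is the idempotent completion of $\Dperf(X)/\cat P_x$, and that functor is fully faithful, $u$ is zero in $\Dperf(X)/\cat P_x$. In a Verdier quotient, a morphism is zero iff it factors through an object of the kernel. So $u=q\circ p$ with $p:c\to z$, $q:z\to\unit$ and $z\in\cat P_x=\cat P\cap\Dperf(X)\subseteq\cat P$.

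\emph{Transferring back to $\alpha$: the main obstacle.} It remains to show $\alpha$ itself factors through an object of $\cat P$. The difficulty is a circularity. The factorization $u=q\circ p$ descends to $E$ if $p$ extends along $i$. That extension exists once $\Hom(\Sigma^{-1}d,z)=\Hom(d,z)=0$, which needs $m$ below the lower cohomological bound of $z$. But $z$ was produced only after $c$, and hence $m$, was chosen.

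The first thing I would try is to show that $z$ can be chosen with amplitude controlled independently of $c$. Concretely, I would aim to write $z=c\otimes k$ with $k\in\cat P_x$ of fixed amplitude, for instance a Koszul complex on generators near $x$ as in \cref{lem:koszul}, so that $m$ can be fixed in advance.

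Failing that, I would use \cref{rec:nilpotence}. Take the triangle $E\xrightarrow{\alpha}\unit\xrightarrow{\eta}b\to\Sigma E$. Since $\alpha_x=0$, $\eta_x$ is split mono; the aim is to deduce that $\eta$ becomes split mono in $\Dps(X)/\cat P$, which forces $\alpha=0$ there. For this I would exploit that the fiber of $\eta$ is tensor-nilpotent on a suitable object outside $\cat P$, in the spirit of \cref{lem:local-triangle}. I would then use primality of $\cat P$ to conclude that $\alpha^{\otimes n}$, and hence $\alpha$, vanishes modulo $\cat P$.
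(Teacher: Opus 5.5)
Your two reductions are sound, but the proposal stops exactly where the content of the lemma lies. In the last step you are also aiming for more than you need: you try to factor $\alpha$ through a \emph{perfect} object $z\in\cat{P}\cap\Dperf(X)$, and this is the source of the circularity you describe.

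The missing idea is that it suffices to factor $\alpha$ through $E\otimes h$ with $h\in\Dperf(X)$ and $h_x=0$. Such an object lies in $\cat{P}$ because $h\in\cat{P}\cap\Dperf(X)$ and $\cat{P}$ is a tensor ideal of $\Dps(X)$, so $E$ itself may appear in the factorization. The paper produces such an $h$ globally and without any perfect approximation:
\begin{enumerate}
\item Let $e_Y\to\unit\to f_Y\to\Sigma e_Y$ be the idempotent triangle of the finite localization $\Der(X)\to\Der(\OXx)$, where $Y=\gen(x)^c$. Since $\alpha_x=0$, $\alpha$ factors through some $\beta\colon E\to e_Y\otimes C$.
\item As $C$ is perfect, $e_Y\otimes C$ is left-bounded. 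For pseudo-coherent $E$, the functor $\ihom{E,-}$ commutes with localization at $x$ on left-bounded complexes. Hence $f_Y\otimes\ihom{E,e_Y\otimes C}=0$, i.e.\ $\ihom{E,e_Y\otimes C}\in\Locidsub{e_Y}$.
\item Since $\unit$ is compact, the adjoint $\unit\to\ihom{E,e_Y\otimes C}$ of $\beta$ factors through some $h\in\Locidsub{e_Y}\cap\Dperf(X)$. So $\beta$, and hence $\alpha$, factors through $E\otimes h\in\cat{P}$.
\end{enumerate}

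Neither fallback closes the gap as written.

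In the first, the lower cohomological bound of $z=c\otimes k$ moves with $c$, hence with $m$, so you still cannot take $m$ below it. What does work, given $u=\tilde q\circ(c\otimes\lambda)$ with $\lambda\colon\unit\to k$, is to extend $\tilde q\colon c\otimes k\to\unit$ along $i\otimes k$ instead. Its cone $d\otimes k$ lies in $\Der^{\le m+r}(X)$ when $k$ has Tor-amplitude $\le r$, so the extension $\hat q$ exists once $m+r\le -2$. Injectivity of $i^*$ then gives $\alpha=\hat q\circ(E\otimes\lambda)$, which is again a factorization through $E\otimes k$.
\begin{itemize}
\item On an affine scheme $\Spec(R)$ with $x=\frakp$, this finishes the proof with $k=\cone(s)$ for some $s\notin\frakp$ with $su=0$.
\item On a general quasi-compact quasi-separated $X$ there is no such global function $s$. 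Nothing in your argument bounds the amplitude of a perfect $k\in\cat{P}\cap\Dperf(X)$ extracted from $u$ independently of $m$. This is precisely what the use of $e_Y$ and the compactness of $\unit$ circumvent.
\end{itemize}

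The second fallback is not an argument. Saying that ``the map $\unit\to b$ becomes split mono modulo $\cat{P}$'' just restates the goal. The final inference is also invalid: a tensor-nilpotent morphism need not vanish even when the zero ideal is prime, e.g.\ $\epsilon\colon\unit\to\unit$ in $\Dperf(\bbQ[\epsilon]/(\epsilon^2))$. Primality of $\cat{P}$ constrains objects, not morphisms.
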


\begin{proof}
	The localization $\Der(X) \to\Der(\OXx)$ is the finite localization corresponding to the Thomason subset $Y\coloneqq\gen(x)^c \subseteq X \cong\Spc(\Dperf(X))$. Its kernel~$\cat L$ is $\Locidsub{e_Y}$ where $e_Y \to \unit \to f_Y \to \Sigma e_Y$ is the associated idempotent triangle. See \cite{BalmerFavi11} and~\cite{BarthelHeardSanders23a}. Since $\alpha_x = 0$, we have 
	\[\begin{tikzcd}[row sep=2.5ex]
		& E \ar[d,"\alpha"] \ar[dr,bend left=15,"0"] \ar[dl,dashed,bend right=15,"\exists","\beta"'] &\\
		e_Y \otimes C \ar[r] & C \ar[r] & f_Y \otimes C
	\end{tikzcd}\]
	and it suffices to prove that $\beta$ is killed by the quotient $q:\Dps(X) \to \Dps(X)/\cat P$. Let $\beta':\unit \to \ihom{E,e_Y\otimes C}$ denote the adjoint map.

	The support of~\cref{def:support} makes sense for any complex $A$ and is always specialization closed. Hence $\cat L =\SET{A \in \Der(X)}{\Supp(A) \subseteq Y} \eqqcolon \Der_Y(X)$. Moreover, $\Supp(A)=\bigcup_{i\in\bbZ} \Supp(H^i(A))$ coincides with the union of the ordinary supports of the cohomology modules. Thus the inclusion $\cat L =\Der_Y(X) \hookrightarrow \Der(X)$ is $t$-exact for the standard \mbox{$t$-structure} since the truncations just throw away some of the cohomology modules. Its right adjoint $e_Y \otimes - :\Der(X) \to \Der_Y(X)$ is thus left $t$-exact. Hence $e_Y\otimes C$ is left-bounded since~$C$ is perfect. Derived hom from a pseudo-coherent complex to a left-bounded complex commutes with taking stalks by~\cite[Lemma~\href{https://stacks.math.columbia.edu/tag/0GM8}{0GM8}]{stacks-project}. In other words, $f_Y \otimes \ihom{E,e_Y \otimes C} \simeq \ihom{f_Y \otimes E, f_Y \otimes e_Y \otimes c}$ and this vanishes since ${f_Y\otimes e_Y= 0}$. Therefore $\ihom{E,e_Y \otimes C} \in \Locidsub{e_Y}$. Thus, since~$\unit$ is compact, the map $\beta':\unit \to \ihom{E,e_Y\otimes C}$ factors as $\unit \to d\to \ihom{E,e_Y\otimes C}$ for some compact $d\in\Locidsub{e_Y} \cap \Dperf(X)$. The morphism $\beta$ then factors as
	\[
		E \simeq E \otimes \unit \to E \otimes d \to E\otimes \ihom{E,e_Y\otimes C} \xrightarrow{\text{ev}} e_Y \otimes C.
	\]
	Now $d \in \Dperf(X) \cap \cat P$ since $\varphi(\cat P)=x \not\in \Supp(d)$. Hence $E \otimes d \in \cat P$ since $\cat P$ is an ideal in $\Dps(X)$. Thus $q(\beta) = 0$ in $\Dps(X)/\cat P$ and hence $q(\alpha) = 0$ too.
\end{proof}

\begin{Lem}\label{lem:verdier}
	Let $\cat T$ be a compactly generated triangulated category and $\cat L \coloneqq \Loc(\cat G)$ the localizing subcategory generated by a set of compact objects $\cat G \subseteq \cat T^c$. Write $q:\cat T \to \cat T/\cat L \eqqcolon \cat S$ for the associated Verdier localization. Suppose $\alpha: q(x) \to q(t)$ is a morphism in $\cat S$ with $x \in \cat T^c$ compact and $t \in \cat T$ arbitrary. Then there exists a compact object $c \in \cat T^c$ and morphisms $\beta: c \to t$ and $\sigma:c\to x$ such that $q(\sigma)$ is an isomorphism and $q(\beta) = \alpha \circ q(\sigma)$.
\end{Lem}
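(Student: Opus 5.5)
Morphisms in the Verdier quotient $\cat S = \cat T/\cat L$ are roofs. I will represent $\alpha$ by a roof $q(x) \xleftarrow{\;s\;} y \xrightarrow{\;g\;} t$ in $\cat T$, where $\operatorname{cone}(s) \in \cat L$ and $\alpha = q(g)\circ q(s)^{-1}$. The issue is that $y$ need not be compact. The plan is to replace $y$ by a compact object $c$ mapping to it, in such a way that the composite $c \to y \to x$ still becomes invertible in $\cat S$.

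Complete $s$ to an exact triangle $y \xrightarrow{s} x \xrightarrow{h} \ell \to \Sigma y$ with $\ell \in \cat L = \Loc(\cat G)$. Since $\cat G$ consists of compact objects, $\cat L$ is compactly generated by $\cat G$ and its compact objects are $\cat L^c = \cat L \cap \cat T^c = \thicksub{\cat G}$ (Neeman). Moreover, $\ell$ is a homotopy colimit (filtered colimit, in the $\infty$-categorical sense) of objects of $\thicksub{\cat G}$. Because $x$ is compact, the map $h : x \to \ell$ therefore factors as $x \xrightarrow{h'} \ell' \xrightarrow{u} \ell$ with $\ell' \in \thicksub{\cat G} \subseteq \cat T^c$. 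Concretely, this follows from the standard fact that any map from a compact object into an object of $\Loc(\cat G)$ factors through an object of $\thicksub{\cat G}$.

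Next, complete $h'$ to an exact triangle $c \xrightarrow{\sigma} x \xrightarrow{h'} \ell' \to \Sigma c$. Then $c$ is compact, being an extension of compact objects. Also $q(\sigma)$ is an isomorphism, since $\operatorname{cone}(\sigma) \simeq \ell' \in \cat L$. By the octahedral axiom (or simply by the axiom TR3 applied to the map of triangles induced by $\id_x$ and $u$), there is a morphism $\gamma : c \to y$ with $s\circ\gamma = \sigma$.

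Set $\beta \coloneqq g\circ\gamma : c \to t$. Then
\[
q(\beta) = q(g)\,q(\gamma) = q(g)\,q(s)^{-1}\,q(s)\,q(\gamma) = \alpha\circ q(s\gamma) = \alpha\circ q(\sigma),
\]
as required.

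The main obstacle is the factorization step, namely that a map from a compact object into an arbitrary object of $\Loc(\cat G)$ factors through $\thicksub{\cat G}$. I will cite Neeman's theorem that $\Loc(\cat G)^c = \thicksub{\cat G}$. From that result, the factorization follows by writing $\ell$ as a filtered colimit of compacts in the compactly generated category $\cat L$ and using compactness of $x$ relative to $\cat L$. The latter holds because $\cat L \hookrightarrow \cat T$ preserves coproducts and $x$ is compact in $\cat T$, so $\Hom(x,-)$ commutes with coproducts in $\cat L$.
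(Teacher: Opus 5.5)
Your proposal is correct and follows essentially the same argument as the paper. You represent $\alpha$ by a roof, use compactness of $x$ to factor $x\to\ell$ through an object of $\Loc(\cat G)\cap\cat T^c=\thicksub{\cat G}$, take $c$ to be the fiber of $x\to\ell'$, lift $\sigma$ through $y$ because its composite to $\ell$ vanishes, and set $\beta$ to be the composite $c\to y\to t$; this matches the paper step for step.
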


\begin{proof}
	The morphism $\alpha$ is represented by a fraction $x\xleftarrow{u} y \xrightarrow{f} t$ and we have an exact triangle $y\xrightarrow{u}x\xrightarrow{v} \ell \to \Sigma y$ where $\ell \in \cat L$. Since $x$ is compact, $v$ factors as $x\xrightarrow{w} z \to \ell$ for some $z \in \Locsub{\cat G} \cap \cat T^c = \thicksub{\cat G}$. We then have an exact triangle $c \xrightarrow{\sigma} x \xrightarrow{w} z \to \Sigma c$ of compact objects. Since $v\circ \sigma = 0$, the morphism $\sigma$ factors as $c\xrightarrow{\theta} y \xrightarrow{u} x$ for some morphism $\theta$. Thus, defining $\beta \coloneqq f\circ \theta$, we see that $\alpha$ is represented by the fraction $x \xleftarrow{\sigma} c \xrightarrow{\beta} t$ and the result follows.
\end{proof}

\begin{Thm}\label{thm:fibers-irreducible}
	Let $X$ be a quasi-compact and quasi-separated scheme. The fibers~of
	\[
		\varphi: \Spc(\Dps(X)) \to X
	\]
	are irreducible. The generic point of a fiber $\varphi^{-1}(\{x\})$ is the point $\tau(x)$.
\end{Thm}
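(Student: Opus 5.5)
The plan is to show that every prime $\cat P$ in the fiber over $x$ is contained in $\tau(x)$. Since $\varphi(\tau(x)) = x$ by the splitting $\varphi\circ\tau=\id$, and closure in the Balmer spectrum is given by $\overline{\{\cat Q\}} = \SET{\cat P}{\cat P \subseteq \cat Q}$, this says exactly that $\tau(x)$ is a generic point of $\varphi^{-1}(\{x\})$. The fiber is then irreducible.

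Concretely, fix $\cat P$ with $\varphi(\cat P) = x$. Recall that $\varphi(\cat P)=x$ means $\cat P \cap \Dperf(X) = \cat P_x$: a perfect complex lies in $\cat P$ if and only if it vanishes at $x$. I will argue by contradiction. Suppose $E \in \cat P$ with $x \in \Supp(E)$, so $E_x \neq 0$ in $\Dps(\OXx)$.

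\textbf{Local step.} Apply \cref{lem:local-triangle} to $E_x$ over $R = \OXx$. This gives a triangle $W \xrightarrow{\xi} R \xrightarrow{\alpha} \Sigma^m E_x \to \Sigma W$ and a nonzero Koszul complex $K$ with $\xi^{\otimes n}\otimes K = 0$.

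\textbf{Globalizing.} $\Der(\OXx) = \Der(X)/\Locidsub{e_Y}$ is a finite localization generated by compact objects, so \cref{lem:verdier} applies to $\alpha: q(\unit) \to q(\Sigma^m E)$. It produces a perfect $c$, a map $\sigma: c \to \unit$ with $\sigma_x$ an isomorphism, and a map $\beta: c \to \Sigma^m E$ with $\beta_x \cong \alpha$ (modulo $\sigma_x$). Let $w \xrightarrow{\xi'} c \xrightarrow{\beta} \Sigma^m E$ be the fiber triangle in $\Dps(X)$; then $\xi'_x \cong \xi$. I also lift $K$ to a perfect complex $\tilde K \in \Dperf(X)$ with $\tilde K_x \cong K$. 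The generators of the ideal involved in $K$ extend to an affine neighborhood of $x$, and one can then lift, possibly up to adding $\Sigma\tilde K$ via Thomason's theorem, or more simply use that $\Dperf(X) \to \Dperf(\OXx)$ is essentially surjective up to summands; only $\tilde K_x \neq 0$ is really needed.

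\textbf{Killing in $\Dps(X)/\cat P$.} The map
\[
(\xi')^{\otimes n}\otimes \tilde K : w^{\otimes n}\otimes \tilde K \to c^{\otimes n}\otimes\tilde K
\]
goes from a pseudo-coherent complex to a perfect complex, and it vanishes at $x$. By \cref{lem:kill-pseudo-P} it therefore vanishes in $\Dps(X)/\cat P$. On the other hand, $E \in \cat P$ forces $q(\beta)=0$, so $q(\xi')$ is a split epimorphism. Hence $q((\xi')^{\otimes n}\otimes\tilde K)$ is also split epi. Being zero and split epi, its target must vanish, so $c^{\otimes n}\otimes \tilde K \in \cat P\cap\Dperf(X) = \cat P_x$. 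But $(c^{\otimes n}\otimes\tilde K)_x \cong K \neq 0$, which is the desired contradiction.

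The main obstacle is the globalization step: producing global lifts $\beta$, $\xi'$ and $\tilde K$ whose stalks recover the local data, and keeping track of the twist by $\sigma$. I expect \cref{lem:verdier} to be exactly the tool for this. The lift of $K$ should be handled by a standard Thomason-type argument, or avoided by lifting the finitely many generators of $I$ to sections over an affine open and taking a global perfect complex supported appropriately.
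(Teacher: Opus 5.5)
Your proposal is correct and follows essentially the same route as the paper. You apply \cref{lem:local-triangle} at the stalk, globalize with \cref{lem:verdier}, lift the Koszul complex to a perfect $\tilde K$ via Thomason so that $\tilde K_x\simeq K\oplus\Sigma K$, and kill $(\xi')^{\otimes n}\otimes\tilde K$ in $\Dps(X)/\cat P$ with \cref{lem:kill-pseudo-P}; your ``split epi plus direct stalk computation'' in place of the paper's ``isomorphism plus primeness of $\cat P$'' is an inessential variation. One small correction to your alternative for lifting $K$: you need $\xi^{\otimes n}\otimes\tilde K_x=0$ as well as $\tilde K_x\neq 0$, so a lift only ``up to summands'' fails if the extra summand is not in $\Nil(\xi)$, whereas the Thomason lift $K\oplus\Sigma K$ (the paper's choice) avoids this.
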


\begin{proof}
	Let $\cat P \in \Spc(\Dps(X))$ be a prime ideal with $x\coloneqq \varphi(\cat P) \in X$. We claim that $\cat P \subseteq \tau(x)$ which means that $\cat P$ is in the closure of $\tau(x)$ in the Balmer topology on the spectrum. To this end, let $E \in \cat P$ and suppose for a contradiction that $E\not\in\tau(x)$. By definition, this means that $E_x \neq 0$ in $\Der(\OXx)$. Applying \cref{lem:local-triangle} and replacing~$E$ by a shift without loss of generality, we have an exact triangle
	\[
		w\xrightarrow{\xi} \unit_x \xrightarrow{\alpha} E_x \to \Sigma w
	\]
	in $\Der(\OXx)$ and a nonzero compact $k \in \Der(\OXx)$ such that $\xi^{\otimes n} \otimes k = 0$ for some~$n \ge 1$.

	By~\cref{lem:verdier}, there is a compact object $c \in \Dperf(X)$ and morphisms $\beta:c\to E$ and $\sigma:c \to \unit$ in $\Der(X)$ with $\sigma_x$ an isomorphism and $\beta_x=\alpha \circ \sigma_x$. Choosing an exact triangle $W \xrightarrow{\zeta} c \xrightarrow{\beta} E \to \Sigma W$ in $\Der(X)$, we obtain a commutative diagram
	\[\begin{tikzcd}[ampersand replacement=\&]
		W_x \ar[r,"\zeta_x"] \ar[d,"\simeq"'] \& c_x \ar[r,"\beta_x"] \ar[d,"\sigma_x","\simeq"'] \& E_x \ar[d,equals] \ar[r] \& \Sigma W_x  \ar[d,"\simeq"]\\
		w \ar[r,"\xi"] \& \unit_x \ar[r,"\alpha"] \& E_x \ar[r] \& \Sigma w.
	\end{tikzcd}\]
	There exists a compact object $K \in \Dperf(X)$ with $K_x \simeq k \oplus \Sigma k$. We thus see that 
	\begin{equation}\label{eq:zeta-nilpotent}
		\zeta^{\otimes n}\otimes K : W^{\otimes n}\otimes K \to c^{\otimes n}\otimes K
	\end{equation}
	is a morphism in $\Der(X)$ from a pseudo-coherent complex to a perfect complex which vanishes in $\Der(\OXx)$. By~\cref{lem:kill-pseudo-P}, it also vanishes in the quotient $q:\Dps(X)\to\Dps(X)/\cat P$. On the other hand, $E\in \cat P$ by hypothesis, so $q(\zeta)$ is an isomorphism. Since $q(\zeta^{\otimes n} \otimes K)$ is both zero and an isomorphism, the object $q(c^{\otimes n} \otimes K)$ vanishes, which means $c^{\otimes n} \otimes K \in \cat P$. Hence either $K \in \cat P$ or $c \in \cat P$ since $\cat P$ is prime. But these are compact objects and $\cat P \cap \Dperf(X) = \SET{b \in \Dperf(X)}{ b_x = 0}$. Thus $K_x = 0$ or $c_x = 0$. This is impossible since $K_x$ contains the nonzero summand $k$ while $c_x \simeq \unit_x \neq 0$.
\end{proof}

\begin{Rem}
	In the case of a noetherian affine scheme, \cref{thm:fibers-irreducible} is due to Matsui--Takahashi~\cite{MatsuiTakahashi17}. The theorem does not readily reduce to the affine case since for a quasi-compact open $U \subset X$, the commutative square
	\[\begin{tikzcd}
			\Spc(\Dps(U)) \ar[r]\ar[d] & \Spc(\Dps(X))\ar[d] \\
			U \ar[r,hook] & X
	\end{tikzcd}\]
	is not a pullback \cite{SandersZhang26}. This is the case even for $U=\Spec(\bbQ) \hookrightarrow \Spec(\bbZ_{(p)})=X$.
\end{Rem}

\section{Reflecting pseudo-coherence}

The main result of the last section has the following direct consequence:

\begin{Prop}\label{prop:virtually-small}
	Let $X$ be a quasi-compact and quasi-separated scheme. Suppose $a \in \Dps(X)$ and $c \in \Dperf(X)$ satisfy $\Supp(c) \subseteq \Supp(a)$. Then there exists an object ${b \in \Dps(X)}$ and an integer $n\ge 1$ such that $c^{\otimes n} \in \thicksub{a \otimes b}$. 
\end{Prop}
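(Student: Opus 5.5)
The plan is to work entirely inside the tensor-triangulated category $\cat K \coloneqq \Dps(X)$. I would first show that $\supp(c) \subseteq \supp(a)$ in $\Spc(\Dps(X))$, using \cref{thm:fibers-irreducible}. Balmer's general theory then puts $c$ in the radical of $\thickidsub{a}$. Finally, the twisting object $b$ is obtained by collapsing finitely many twists into a single direct sum.

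\emph{Step 1: comparing universal supports.} Let $\cat P \in \supp(c)$, meaning $c \notin \cat P$, and set $x \coloneqq \varphi(\cat P)$. The definition of $\varphi$ gives
\[
\cat P \cap \Dperf(X) = \cat P_x = \SET{d \in \Dperf(X)}{d_x = 0},
\]
a fact already used at the end of the proof of \cref{thm:fibers-irreducible}. Since $c$ is perfect and $c \notin \cat P$, we get $c_x \neq 0$, so $x \in \Supp(c) \subseteq \Supp(a)$. Hence $a_x \neq 0$, which says exactly that $a \notin \tau(x)$.

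By \cref{thm:fibers-irreducible}, $\cat P$ lies in the fiber $\varphi^{-1}(\{x\})$, whose generic point is $\tau(x)$. As the proof of that theorem establishes, this means $\cat P \subseteq \tau(x)$. Therefore $a \notin \cat P$, i.e.\ $\cat P \in \supp(a)$. This gives $\supp(c) \subseteq \supp(a)$.

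\emph{Step 2: from supports to radical ideals.} In any tensor-triangulated category, the radical of a thick tensor ideal $\cat J$ is the intersection of the primes containing $\cat J$ (Balmer). Apply this to $\cat J = \thickidsub{a}$:
\begin{itemize}
\item[] every prime $\cat P$ containing $a$ has $\cat P \notin \supp(a)$, hence $\cat P \notin \supp(c)$ by Step 1, hence $c \in \cat P$;
\item[] so $c$ lies in the intersection of all primes containing $\thickidsub{a}$, which is $\radideal{a}$.
\end{itemize}
Consequently there is an $n \ge 1$ with $c^{\otimes n} \in \thickidsub{a}$, the thick tensor ideal of $\Dps(X)$.

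\emph{Step 3: producing a single $b$.} By the Notation in \cref{sec:preliminaries}, $\thickidsub{a} = \thicksub{a \otimes \Dps(X)}$. Any object in a thick closure is built in finitely many steps (cones, shifts, summands) from finitely many generators. So there are $b_1,\dots,b_r \in \Dps(X)$ with
\[
c^{\otimes n} \in \thicksub{a\otimes b_1,\dots,a\otimes b_r}.
\]
Set $b \coloneqq b_1 \oplus \cdots \oplus b_r \in \Dps(X)$. Each $a \otimes b_i$ is a direct summand of $a \otimes b$, so it lies in $\thicksub{a \otimes b}$. Hence $c^{\otimes n} \in \thicksub{a\otimes b}$, as required.

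\emph{Main obstacle.} The only substantive point is Step 1, namely passing from the scheme-theoretic inclusion $\Supp(c) \subseteq \Supp(a)$ to the inclusion of universal supports. Without \cref{thm:fibers-irreducible}, a prime $\cat P$ over $x$ could in principle contain $a$ even though $a_x \neq 0$. The theorem's conclusion $\cat P \subseteq \tau(x)$ is exactly what rules this out. Steps 2 and 3 are formal.
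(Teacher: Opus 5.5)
Your proposal is correct and follows essentially the same route as the paper. You first prove $\supp(c)\subseteq\supp(a)$ in $\Spc(\Dps(X))$ via \cref{thm:fibers-irreducible}; your explicit argument ($\cat P\subseteq\tau(x)$ and $a\notin\tau(x)$, so $a\notin\cat P$) is the element-wise form of the paper's remark that $\cat P$ lies in the closure of $\tau(\varphi(\cat P))\in\supp(a)$, which is a closed set. You then get $c^{\otimes n}\in\thickidsub{a}$ from Balmer's description of radicals and collapse the finitely many twists into a single direct sum, exactly as the paper does.
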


\begin{proof}
	The universal Balmer support of a perfect complex $c\in\Dperf(X)$ coincides with $\Supp(c)$ under the identification $\Spc(\Dperf(X)) \cong X$. Hence $\varphi^{-1}(\Supp(c))=\supp(c)$ is the universal support of $c$ regarded as an object of $\Dps(X)$. The hypothesis $\Supp(c) \subseteq \Supp(a)$ thus implies
	\[
		\supp(c) = \varphi^{-1}(\Supp(c)) \subseteq \varphi^{-1}(\Supp(a)) = \varphi^{-1}(\tau^{-1}(\supp(a))).
	\]
	Thus, if $\cat P \in \supp(c)$ then $\tau(\varphi(\cat P)) \in \supp(a)$. But $\cat P$ is in the closure of $\tau(\varphi(\cat P))$ by \cref{thm:fibers-irreducible}. Since $\supp(a)$ is closed, it follows that $\cat P \in \supp(a)$. This establishes that $\supp(c) \subseteq \supp(a)$. By the classification of radical thick ideals, $c \in \sqrt{\thickidsub{a}}$, so that $c^{\otimes n} \in \thickidsub{a} = \thicksub{a \otimes \Dps(X)}$ for some $n\ge 1$. It follows that~$c^{\otimes n}$ is in the thick subcategory generated by a finite collection of objects $a \otimes b_1, \ldots a\otimes b_k$ with $b_1,\ldots,b_k \in \Dps(X)$. We may then take $b \coloneqq \bigoplus_{1\le i\le k} b_i$.
\end{proof}

\begin{Exa}\label{exa:full}
	If $a$ is a pseudo-coherent complex with $\Supp(a) = X$ then there is a pseudo-coherent complex $b$ such that $\unit \in \thicksub{a \otimes b}$.
\end{Exa}

\begin{Exa}\label{exa:nonempty}
	If $a$ is a nonzero pseudo-coherent complex then there exists a pseudo-coherent complex $b$ such that $\thicksub{a \otimes b}$ contains a nonzero perfect complex. This follows from~\cref{prop:virtually-small} since the support $\Supp(a) = \bigcup_{i \in \bbZ} \Supp(H^i(a))$ always contains a nonempty Thomason closed subset. This is because, as recalled in \cref{rem:globally-bounded}, the complex is bounded on the right and the rightmost cohomology module~$H^m(a)$ is finitely presented. Over a quasi-compact scheme, the support of a finitely presented $\OX$-module is always Thomason closed. Thus $\Supp(H^m(a))=\Supp(c)$ for some nonzero perfect complex~$c$.
\end{Exa}

\begin{Rem}\label{rem:virtually-small}
	Let $R$ be a commutative ring. A nonzero complex $a \in \Der(R)$ is said to be \emph{virtually small} in the terminology of~\cite{DwyerGreenleesIyengar06} if the thick subcategory $\thicksub{a}$ contains a nonzero perfect complex. Pseudo-coherent complexes need not be virtually small in general. Indeed, Pollitz \cite[Theorem~5.2]{Pollitz19} establishes that a noetherian local ring must be a complete intersection if every pseudo-coherent complex is virtually small. Nevertheless, \cref{exa:nonempty} shows that every pseudo-coherent complex is virtually small up to tensoring by a pseudo-coherent complex.
\end{Rem}

\begin{Exa}\label{exa:key-example}
	Let $k$ be a field and consider $A=k[x,y]/(x^2,xy,y^2)$ and $B=k[x]/(x^2)$.
	Conceptually, $A$ is the trivial square-zero extension
	\[
		A\simeq B \ltimes  k
	\]
	where $k$ is regarded as a $B$-module via $B \to B/(x) = k$. As an $A$-module $B$ is finitely generated and hence coherent (since $A$ is noetherian) but it is not perfect. Otherwise, the exact sequence $0\to (y) \to A \to B \to 0$ would imply that $(y) \simeq k$ is perfect, but $A$ is not regular. Consider the complex
	\[
		E = (\cdots \xrightarrow{x} A \xrightarrow{x} A \xrightarrow{x} A \to 0) \in \Der(A)
	\]
	where the rightmost $A$ is in cohomological degree zero. We have $H^0(E) = A/(x) \neq 0$ and $H^i(E) = \ker(x)/\im(x) = (x,y)/(x) \simeq k \neq 0$ for $i < 0$. Thus $E$ is a pseudo-coherent complex which is not bounded coherent. On the other hand, now consider
	\[
		B\dotimes_A E = (\cdots \xrightarrow{x} B \xrightarrow{x} B \xrightarrow{x} B \to 0) \in \Der(B).
	\]
	We have $H^0(B\dotimes_A E) = B/(x) = k$ but $H^i(B\dotimes_A E) = \ker(x)/\im(x) = 0$ for $i<0$. Hence we have an isomorphism
	\[
		B\dotimes_A E \simeq k
	\]
	in $\Der(B)$ and hence also in $\Der(A)$ by restriction of scalars.
\end{Exa}

\begin{Rem}
	The pseudo-coherent twist~$b\in\Dps(X)$ in \cref{prop:virtually-small} cannot be chosen to be perfect in general. Indeed, in the above example $B$ is pseudo-coherent as an $A$-module. However suppose there is a nonzero perfect complex $c\in\Dperf(A)$ such that $c \in \thicksub{B \dotimes_A b}$ for some perfect complex $b\in\Dperf(A)$. Then 
	\begin{align*}
		E \dotimes_A c \in \thicksub{E \dotimes_A B \dotimes_A b} = \thicksub{k\dotimes_A b}.
	\end{align*}
	But $k \in \Dbcoh(A)$ and hence $k \dotimes_A b \in \Dbcoh(A)$ too since $b$ is perfect. It follows that $E \dotimes_A c \in \Dbcoh(A)$. Now $\cat C \coloneqq \SET{ c' \in \Dperf(A) }{ E\dotimes_A c' \in \Dbcoh(A)}$ is a thick ideal of $\Dperf(A)$. However, since $\Spec(A)=\Spc(\Dperf(A))$ is just a single point, every nonzero perfect complex generates $\Dperf(A)$ as a radical ideal. Since $0\neq c \in \cat C$, it would follow that $\unit \in \cat C$ meaning $E \in \Dbcoh(A)$ which is false.
\end{Rem}

\begin{Prop}\label{prop:reflects-pseudo}
	Let $f:X \to Y$ be a surjective proper morphism of noetherian schemes. The derived pullback $f^*:\Der(Y)\to\Der(X)$ reflects pseudocoherent complexes: If $a \in \Der(Y)$ satisfies $f^*(a) \in \Dps(X)$ then $a \in \Dps(Y)$.
\end{Prop}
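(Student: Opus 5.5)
The plan is to use the projection formula to turn the hypothesis on $f^*(a)$ into the statement that $a$ becomes pseudo-coherent after tensoring with a suitable object, and then remove that object using \cref{exa:full}. The three ingredients are Grothendieck's \cref{thm:grothendieck}, the fact that $\Dps(X)$ is closed under tensor products (\cref{rem:pseudo-tt}), and \cref{exa:full}.

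\emph{Step 1.} Let $b \in \Dps(Y)$ be arbitrary. Then $f^*(b) \in \Dps(X)$ by \cref{rem:pseudo-tt}. Hence $f^*(a)\otimes f^*(b) \in \Dps(X)$, again because $\Dps(X)$ is a tensor-triangulated subcategory. Since $f$ is proper and the schemes are noetherian, \cref{thm:grothendieck} gives $f_*(f^*(a\otimes b)) \in \Dps(Y)$. The projection formula identifies this object:
\[
	f_*(f^*(a\otimes b)) \simeq f_*(\unit_X)\otimes a\otimes b .
\]
So $a \otimes (f_*(\unit_X)\otimes b)\in \Dps(Y)$ for every $b \in \Dps(Y)$.

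\emph{Step 2.} The object $f_*(\unit_X)$ is pseudo-coherent by \cref{thm:grothendieck}. I claim that $\Supp(f_*(\unit_X)) = Y$. Since $f_*$ is left $t$-exact, $H^0(f_*\unit_X)$ is the ordinary pushforward sheaf $f_*\OX$. For $y\in Y$, surjectivity gives $f^{-1}(U)\neq\emptyset$ for every open $U\ni y$. Then $\OX(f^{-1}U)$ is a nonzero ring, since it contains $1 \neq 0$. Therefore the stalk $(f_*\OX)_y$ is a filtered colimit of nonzero rings along unital maps, hence nonzero. As $\Supp$ of a complex is the union of the supports of its cohomology sheaves, this gives $y\in\Supp(f_*\unit_X)$. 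This verification is the only point needing care, and I expect it to be the main (though mild) obstacle.

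\emph{Step 3.} By \cref{exa:full}, applied on $Y$, there is $b\in\Dps(Y)$ with $\unit_Y \in \thicksub{f_*(\unit_X)\otimes b}$. Tensoring with $a$ is exact, so
\[
	a \simeq a\otimes\unit_Y \in \thicksub{a\otimes f_*(\unit_X)\otimes b}.
\]
By Step 1 the generator $a\otimes f_*(\unit_X)\otimes b$ lies in $\Dps(Y)$. Since $\Dps(Y)$ is a thick subcategory of $\Der(Y)$, it follows that $a\in\Dps(Y)$.
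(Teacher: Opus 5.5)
Your proof is correct and is essentially the paper's argument: the projection formula rewrites $f_*f^*(a)\otimes b$ as $a\otimes f_*(\unit_X)\otimes b$, \cref{exa:full} applied to $f_*(\unit_X)$ supplies the twist $b$, and thickness of $\Dps(Y)$ finishes. The one difference is how you show $\Supp(f_*(\unit_X))=Y$: the paper uses stratification to identify this support with the Balmer--Favi support and then appeals to a general tt-geometric result, whereas your direct stalk computation of $H^0(f_*\unit_X)=f_*\OX$ is more elementary and suffices because the support \eqref{eq:support} is defined stalkwise.
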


\begin{proof}
	Since derived categories of noetherian schemes are stratified, the support~\eqref{eq:h-support} of \cref{def:support} coincides with the Balmer--Favi support by \cite[Theorem~4.7]{BarthelHeardSanders23b}. It then follows from \cite[Corollary~13.15]{BarthelCastellanaHeardSanders23app} that the pseudo-coherent complex~$f_*(\unit_X)$ has full support: $\Supp(f_*(\unit_X)) = Y$. Hence, by \cref{prop:virtually-small}, there is a pseudo-coherent complex $b \in \Dps(Y)$ such that 
	\begin{equation}\label{eq:descendable}
		\unit_Y \in \thicksub{f_*(\unit_X)\otimes b}.
	\end{equation}
	Now let $a \in \Der(Y)$. It follows from \eqref{eq:descendable} that 
	\[
		a \in \thicksub{a \otimes f_*(\unit_X) \otimes b} = \thicksub{f_*f^*(a) \otimes b}.
	\]
	If $f^*(a) \in \Dps(X)$ then the right-hand side is contained in $\Dps(Y)$ by \cref{thm:grothendieck} and \cref{rem:pseudo-tt}.
\end{proof}

\begin{Rem}\label{rem:no-reflect-bounded}
	The derived pullback of a proper surjective morphism need not reflect bounded coherent complexes. Indeed, recall \cref{exa:key-example}. The morphism ${f:\Spec(B) \to \Spec(A)}$ is finite hence proper and is trivially surjective since the spaces are single points. The complex $E$ is a pseudo-coherent complex of $A$-modules such that $f^*(E) = k$ is bounded coherent, but $E$ itself is not bounded coherent.
\end{Rem}

\begin{Rem}
	\Cref{prop:reflects-pseudo} is false if the surjectivity hypothesis is dropped, since then~$f^*$ could even annihilate some perfect complexes. The statement is also false if properness is dropped. For example, the $\bbZ$-module $\bbZ[1/p]$ is not finitely generated, but it becomes finitely generated after base change along $\bbZ \to \bbZ[1/p]\times \bbZ/p$.
\end{Rem}

\begin{Exa}\label{exa:no-reflect}
	Let $f:\Pone \to \Spec(k)$ be the structure map of the projective line. Note that $f_*(\cat O(-1))=0$ since $H^i(\Pone,\cat O(-1)) = 0$ for all $i \in \bbZ$. The complex $\bigoplus_{n \ge 0} \Sigma^{-n}\cat O(-1)$ is thus annihilated by $f_*$ but it is not pseudo-coherent. In particular, pseudo-coherent complexes need not be reflected by~$f_*$.
\end{Exa}

\section{Characterizing pseudo-coherence}

We remind the reader of a basic tool in algebraic geometry which shows that proper morphisms are close to being projective.

\begin{Thm}[Chow's Lemma]\label{thm:chow}
	Let $f:X \to Y$ be a proper morphism of noetherian schemes. Then there exists a surjective proper morphism $g:X'\to X$ such that $f\circ g:X' \to Y$ is projective.
\end{Thm}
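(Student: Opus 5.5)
I will not try to reprove Chow's Lemma from scratch in a novel way. Instead I will follow the classical argument of EGA~II, 5.6.1, observing that we only need a \emph{surjective} proper $g$, which is weaker than the usual birational statement. Throughout, ``projective'' is understood in the sense of EGA: there is a closed immersion over $Y$ into some $\mathbb{P}(\mathcal{E})$, or more concretely into $\mathbb{P}^N_Y$ after the reductions below.

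First I reduce to the case where $X$ is irreducible. Let $X_1,\dots,X_r$ be the irreducible components of $X$ with their reduced structure. The map $\coprod X_j \to X$ is a finite surjection, hence surjective and proper. Each composite $X_j\to Y$ is still proper. Suppose we find surjective proper $g_j:X_j'\to X_j$ with $X_j'\to Y$ projective. Then $X'\coloneqq\coprod_j X_j'$ maps surjectively and properly onto $X$. Moreover $X'$ is projective over $Y$, since a finite disjoint union of closed subschemes of projective spaces over the noetherian base $Y$ embeds as a closed subscheme of a single one, via a Segre-type or linear-join construction. So assume $X$ is irreducible.

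Now cover $X$ by finitely many nonempty affine opens $U_1,\dots,U_n$; this is possible since $X$ is noetherian. Each $U_i\to Y$ is of finite type and quasi-projective in the sense of EGA, so there is an immersion $U_i\hookrightarrow P_i$ with $P_i\to Y$ projective. Concretely, $U_i$ is open in a closed subscheme $\bar U_i\subseteq P_i$. Set $U\coloneqq\bigcap_i U_i$, which is nonempty and dense because $X$ is irreducible, and put $P\coloneqq P_1\times_Y\cdots\times_Y P_n$, which is projective over $Y$. The diagonal-type map $U\to X\times_Y P$ is an immersion. Let $X'$ be the scheme-theoretic closure of its image, and let $g:X'\to X$ and $h:X'\to P$ be the two projections. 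Then $g$ is proper, since $P\to Y$ is proper and properness is stable under base change and closed immersions. Also $g(X')$ is closed and contains the dense set $U$, so $g$ is surjective.

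The main obstacle is to show that $h:X'\to P$ is a closed immersion, so that $f\circ g$ factors through the projective $P\to Y$. Since $h$ is proper (because $X\to Y$ is proper), it suffices to show that $h$ is an immersion. For this I would set $V_i\subseteq P_i$ to be an open subset in which $U_i$ is closed, and $W_i\coloneqq g^{-1}(U_i)\cap h^{-1}(p_i^{-1}(V_i))$. The key steps are two claims.
\begin{enumerate}
	\item The $W_i$ cover $X'$. I would argue that $g^{-1}(U_i)\subseteq p_i^{-1}(V_i)$ on $X'$. The idea is that the two maps $g^{-1}(U_i)\to P_i$, namely $p_i\circ h$ and $U_i\hookrightarrow P_i$ composed with $g$, agree on the dense open $g^{-1}(U)$. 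They therefore agree on all of $g^{-1}(U_i)$, using that $P_i$ is separated and $X'$ is the closure of $U$.
	\item $h|_{W_i}$ is a closed immersion onto $h(X')\cap p_i^{-1}(V_i)$. By the agreement in the first claim, $W_i$ is identified with the graph of the morphism $p_i^{-1}(U_i)\cap h(X')\to U_i\subseteq X$, which is closed since $X$ is separated over $Y$.
\end{enumerate}
Verifying these graph and density arguments scheme-theoretically, rather than only on points, is where care is needed. Once they hold, $h$ is a local immersion that is injective, hence an immersion. Being proper, it is then a closed immersion, which completes the argument.
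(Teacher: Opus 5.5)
Your outline is the classical EGA~II~5.6.1 / Stacks~0200 argument, which is all the paper offers (by citation), and everything through claim~(1) is sound. The gap is in the final step. As you justify them, claims~(1) and~(2) only describe $h$ on $W_i=g^{-1}(U_i)$: there $p_i\circ h=\iota_i\circ g$, so $W_i$ sits inside the graph. Nothing you say rules out a point $x'\notin g^{-1}(U_i)$ with $h(x')\in p_i^{-1}(V_i)$, so the asserted injectivity of $h$ is unproved. Indeed, the fold map $C\sqcup C\to C$ satisfies the analogues of~(1) and~(2) without being injective. Note that claim~(1) only uses separatedness of $P_i$; separatedness of $X$ has to enter exactly here.

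What you need is $h^{-1}(p_i^{-1}(V_i))=g^{-1}(U_i)$, with $h$ restricting to a closed immersion from it into $p_i^{-1}(V_i)$. To prove it, set $Z_i\coloneqq p_i^{-1}(\iota_i(U_i))$, which is closed in $p_i^{-1}(V_i)$. Let $\Gamma_i\subseteq X\times_Y Z_i$ be the graph of $Z_i\to U_i\hookrightarrow X$. It is closed because $X\to Y$ is separated, hence also closed in $X\times_Y p_i^{-1}(V_i)$. The map $U\to X\times_Y P$ factors through $\Gamma_i$. Since scheme-theoretic closure of a quasi-compact immersion commutes with restriction to opens, $h^{-1}(p_i^{-1}(V_i))=X'\cap(X\times_Y p_i^{-1}(V_i))$ is a closed subscheme of $\Gamma_i\cong Z_i$. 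Hence it lies over $U_i$, and $h$ maps it by a closed immersion into $p_i^{-1}(V_i)$.

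Together with~(1), this makes $h$ a closed immersion over the open $\bigcup_i p_i^{-1}(V_i)\supseteq h(X')$. So $h$ is an immersion, and a closed immersion because it is proper; injectivity comes for free. Relatedly, ``injective local immersion $\Rightarrow$ immersion'' is false without properness, e.g.\ $(\mathbb{A}^1\setminus\{0\})\sqcup\{0\}\to\mathbb{A}^1$. You must first use properness to make $h$ a homeomorphism onto its image.

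A smaller point concerns the meaning of ``projective''. The paper uses it to mean a closed immersion into $\mathbb{P}^n_Y$, which is what the Beilinson argument in \cref{thm:proper-pseudo} requires. The EGA notion you invoke only yields closed subschemes of $\mathbb{P}(\mathcal{E})$. Nothing in your reductions turns this into $\mathbb{P}^N_Y$ when $Y$ is not affine. This is easy to repair: an affine $U_i$ of finite type over the quasi-compact $Y$ immerses into some $\mathbb{A}^N_Y\subseteq\mathbb{P}^N_Y$. To see this, take a finite affine cover $\{Y_j\}$ of $Y$ and cover the preimage of each $Y_j$ by finitely many $D(a_{jk})$. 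Then use as coordinates the $a_{jk}$ together with numerators of algebra generators of each $\mathcal{O}(U_i)_{a_{jk}}$ over $\mathcal{O}(Y_j)$. So you can take $P_i$ closed in $\mathbb{P}^{N_i}_Y$, and then use the Segre embedding for $P$ and a linear join for the disjoint union.
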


\begin{proof}
	See \cite[Theorem~5.6.1]{EGAII} or \cite[Lemma~\href{https://stacks.math.columbia.edu/tag/0200}{0200}]{stacks-project}. It is also mentioned in \cite[Exercise~4.10]{Hartshorne77}. The origin of the result is \cite[Lemma~1]{Chow57}.
\end{proof}

\begin{Thm}\label{thm:proper-pseudo}
	Let $f:X\to Y$ be a proper morphism of noetherian schemes. Then
	\begin{equation}\label{eq:Dps-char}
		\Dps(X) = \SET{a \in \Der(X)}{f_*(a \otimes b)\in \Dps(Y) \text{ for all } b\in \Dps(X)}.
	\end{equation}
\end{Thm}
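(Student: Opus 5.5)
The inclusion $\subseteq$ is immediate: if $a\in\Dps(X)$ and $b\in\Dps(X)$ then $a\otimes b\in\Dps(X)$ by \cref{rem:pseudo-tt}, so $f_*(a\otimes b)\in\Dps(Y)$ by \cref{thm:grothendieck}. The content is the reverse inclusion. Let $a\in\Der(X)$ satisfy $f_*(a\otimes b)\in\Dps(Y)$ for all $b\in\Dps(X)$. I will first reduce to the projective case and then treat that case with an explicit generator.

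\emph{Reduction via Chow's Lemma.} By \cref{thm:chow}, choose a surjective proper $g:X'\to X$ with $h\coloneqq f\circ g$ projective. By \cref{prop:reflects-pseudo} it suffices to show $g^*(a)\in\Dps(X')$. For $b'\in\Dps(X')$ we have $g_*(b')\in\Dps(X)$ by \cref{thm:grothendieck}. The projection formula then gives
\[
h_*(g^*a\otimes b')\simeq f_*g_*(g^*a\otimes b')\simeq f_*(a\otimes g_*b')\in\Dps(Y).
\]
So $g^*(a)$ satisfies the same hypothesis relative to the projective morphism $h$. Hence we may assume $f$ is projective, and, since pseudo-coherence is local on $Y$, that $Y=\Spec(R)$ is affine noetherian and $f$ factors through a closed immersion $i:X\hookrightarrow\mathbb{P}^n_R$.

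\emph{The projective case.} Now $\Der(X)$ has the perfect generator $g=\bigoplus_{j=0}^{n}\cat O(-j)$, pulled back from Beilinson's generator on $\mathbb{P}^n_R$. The key step is to show that for any $a\in\Der(X)$,
\[
a\in\Dps(X)\iff \text{$f_*(a\otimes\cat O(j))\in\Dps(R)$ for all $j$.}
\]
The forward direction is Grothendieck. For the converse I plan to use Beilinson's resolution of the diagonal on $\mathbb{P}^n_R$. Applying it to $i_*a$ writes $i_*a$ as an iterated extension (a finite Postnikov tower) of objects of the form
\[
\cat O(-j)\otimes_R \pi_*\bigl(i_*a\otimes\Omega^j(j)\bigr),
\]
where $\pi:\mathbb{P}^n_R\to\Spec R$ is the structure map. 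Each $\Omega^j(j)$ is perfect and lies in the thick subcategory generated by twists $\cat O(m)$, so $\pi_*(i_*a\otimes\Omega^j(j))$ lies in the thick subcategory generated by the objects $f_*(a\otimes\cat O(m))$. These are pseudo-coherent by hypothesis (take $b=\cat O(m)$). Since $\pi^*$ preserves pseudo-coherence, $i_*a\in\Dps(\mathbb{P}^n_R)$. Finally, $i_*$ reflects pseudo-coherence in the noetherian case, because cohomology sheaves are computed after $i_*$ (\cref{rem:noetherian-pseudo}). Hence $a\in\Dps(X)$.

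\emph{Main obstacle.} The hardest part is making the Beilinson step rigorous for an unbounded $a\in\Der(X)$ over a general noetherian base $R$, rather than for coherent sheaves over a field. Concretely, I must check that the Fourier--Mukai identity $i_*a\simeq p_{1*}(p_2^*i_*a\otimes\cat O_\Delta)$ holds in $\Derqc$. This should follow from flat base change and the projection formula, which hold unboundedly. With that in hand, the finite resolution of $\cat O_\Delta$ by $\cat O(-j)\boxtimes\Omega^j(j)$ expresses the right-hand side as the required finite tower. This argument only uses twists by perfect $b$, which is consistent with the stronger criterion $(c)$ of \cref{thm:main}.
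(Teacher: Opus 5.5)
Your proof is correct and follows essentially the paper's route. Chow's lemma together with \cref{prop:reflects-pseudo} reduces to the projective case, which is handled by Beilinson's resolution of the diagonal and the fact that pushforward along a closed immersion of noetherian schemes reflects pseudo-coherence; the paper packages these two ingredients via closure of \eqref{eq:Dps-char} under composition, over an arbitrary rather than affine base.

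One small correction to your closing remark: the Chow step twists by $g_*(b')$, which is pseudo-coherent but generally not perfect. So your argument does not use only perfect twists, and the perfect-twist version genuinely needs an extra argument such as \cref{lem:whatever-twist}.
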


\begin{proof}
	The inclusion $\subseteq$ follows from the Serre--Grothendieck theorem (\cref{thm:grothendieck}) since $\Dps(X) \otimes \Dps(X) \subseteq \Dps(X)$. See~\cite[Th\'{e}or\`{e}me 2.2.1]{SGA6-expose-III}. We prove the~$\supseteq$ inclusion. First we observe that the collection of proper morphisms of noetherian schemes for which~\eqref{eq:Dps-char} holds is closed under composition. Indeed, suppose it holds for two morphisms $f:X\to Y$ and $g:Y\to Z$. Consider any $a \in \Der(X)$ with the property that $g_*f_*(a \otimes b)\in\Dps(Z)$ for all $b \in \Dps(X)$. Then for any $c\in \Dps(X)$ and $d \in \Dps(Y)$ we have
	\[
		g_*(f_*(a\otimes c) \otimes d) \simeq g_*f_*(a \otimes c \otimes f^*(d)) \in \Dps(Z).
	\]
	The result for $g$ then implies $f_*(a \otimes c) \in \Dps(Y)$ and then the result for $f$ implies that $a \in \Dps(X)$.

	We now prove the theorem for a projective morphism $f:X\to Y$ bearing in mind the above composition fact. The argument for this case appears in \cite[Theorem~5.21(b)]{BalmerDellAmbrogioSanders16}. Write $f$ as the composite $X \hookrightarrow \mathbb{P}^n_Y \to Y$ of a closed immersion followed by the standard projection. The case of a closed immersion (of a noetherian scheme) is standard; see~\cite[Lemma~\href{https://stacks.math.columbia.edu/tag/09VA}{09VA}]{stacks-project}.

	Now consider $f:\mathbb{P}^n_Y \to Y$. Let $\Delta : \PnY \to \PnY \times_Y \PnY$ denote the diagonal and let $p,q:\PnY \times_Y \PnY \to \PnY$ denote the two projections. The Beilinson resolution of the diagonal  from \cite{Beilinson78} and \cite[Lecture~3]{Caldararu05} holds for projective space over any base scheme. It provides a resolution of $\cat O_\Delta \in \Dperf(\PnY\times_Y \PnY)$:
	\[
		0 \to p^*(c_n)\otimes q^*(d_n) \to p^*(c_{n-1}) \otimes q^*(d_{n-1}) \to \cdots \to p^*(c_0)\otimes q^*(d_0) \to \cat O_\Delta \to 0
	\]
	where $c_i = \cat O_{\PnY}(-i)$ and $d_i = \Omega_{\PnY/Y}^i(i)$. Any bounded complex is contained in the thick subcategory generated by its individual terms, by inductively using brutal truncations. Thus $\cat O_{\Delta}$ is contained in the thick subcategory generated by the $p^*(c_i)\otimes q^*(d_i)$. Note that the objects $c_i, d_i \in \Der(\PnY)$ are vector bundles hence, in particular, perfect complexes. Next observe that we can express any $a \in \Der(\PnY)$ as
	\[
		a = q_*\Delta_*(a) = q_*\Delta_*\Delta^*p^*(a) = q_*(p^*(a)\otimes\cat O_{\Delta} ).
	\]
	Using the projection formula, we obtain
	\[
		a \in \thicksub{d_i \otimes q_*(p^*(a\otimes c_i))}.
	\]
	Moreover, since $f:\PnY \to Y$ is flat, we have $f^*f_* \simeq q_*p^*$ by applying flat base-change to the Cartesian square defining $\PnY \times_Y \PnY$. We have thus established that
	\begin{equation}\label{eq:a-thick-Pn}
		a \in \thicksub{d_i \otimes f^*f_*(a\otimes c_i)}
	\end{equation}
	for any $a \in \Der(\PnY)$. Hence, if $a$ has the property that $f_*(a\otimes c)\in\Dps(Y)$ for all pseudo-coherent $c$ then, since the $c_i$ and $d_i$ are perfect and $f^*$ preserves pseudo-coherent complexes, the right-hand side of~\eqref{eq:a-thick-Pn} is contained in $\Dps(X)$. This establishes the theorem for projective morphisms.

	Now consider an arbitrary proper morphism $f:X\to Y$. By Chow's Lemma there exists a surjective proper morphism $g:X'\to X$ such that $f\circ g:X' \to Y$ is projective. Let $a \in \Der(X)$ and suppose $f_*(a\otimes b) \in \Dps(Y)$ for all $b \in \Dps(X)$. We claim that $a \in \Dps(X)$. By \cref{prop:reflects-pseudo}, it suffices to prove that $g^*(a) \in \Dps(X')$. By the projective case already established, this holds if $f_*g_*(g^*(a) \otimes c) \in \Dps(Y)$ for all $c \in \Dps(X')$. By the projection formula,
	\[
		f_*g_*(g^*(a) \otimes c)\simeq f_*(a \otimes g_*(c))
	\]
	which is contained in $\Dps(Y)$ by hypothesis since $b\coloneqq g_*(c) \in \Dps(X)$ because~$g$ is proper.
\end{proof}

\begin{Rem}
	The following lemma allows us to rewrite~\eqref{eq:Dps-char} as
	\begin{equation*}
		\Dps(X) = \SET{a \in \Der(X)}{f_*(a \otimes b)\in \Dps(Y) \text{ for all } b\in \Dperf(X)}.
	\end{equation*}
	In other words, it makes no difference whether we twist by all pseudo-coherent complexes or only by perfect complexes.
\end{Rem}

\begin{Lem}\label{lem:whatever-twist}
	Let $f:X\to Y$ be a morphism of noetherian schemes and let $a \in \Der(X)$. The following statements are equivalent:
	\begin{enumerate}
		\item $f_*(a \otimes b) \in \Dps(Y)$ for all $b \in \Dps(X)$.
		\item $f_*(a \otimes b) \in \Dps(Y)$ for all $b \in \Dperf(X)$.
	\end{enumerate}
\end{Lem}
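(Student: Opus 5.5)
The implication (a)$\Rightarrow$(b) is trivial because $\Dperf(X)\subseteq\Dps(X)$. For (b)$\Rightarrow$(a), I would approximate an arbitrary pseudo-coherent $b$ by perfect complexes, as in \cref{rem:approximation}, and then use the boundedness properties of $f_*$ from \cref{rec:bounded}. Since $Y$ is noetherian, $\Dps(Y)=\Dpcoh(Y)$ by \cref{rem:noetherian-pseudo}. So it suffices to show two things: $f_*(a\otimes b)$ is bounded on the right, and for each fixed degree $i$ the cohomology $H^i(f_*(a\otimes b))$ is coherent.

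First I extract boundedness information about $a$ from hypothesis (b). Take $b=\unit$: then $f_*(a)\in\Dps(Y)$, so $f_*(a)$ is bounded on the right. I actually need $a$ itself, or at least $a\otimes b$, to be bounded on the right. Apply (b) with $b=g$ a perfect generator of $\Der(X)$ (\cref{rec:perfect-generator}), together with twists $g^\vee$ and duals. For example, $f_*(a\otimes g^\vee)=f_*\ihom{g,a}$ is bounded on the right for every perfect generator. For affine $Y$, $\Gamma(Y,-)$ is $t$-exact and $p_*\ihom{g,-}$ reflects right-boundedness, so $a\in\Der^{\sqsupset}(X)$. For general $Y$, I would cover $Y$ by finitely many affines and use that right-boundedness is local. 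This step, showing that (b) forces $a$ to be bounded on the right, is the point I expect to need the most care, since it goes through the reflection statement from \cite{stacks-project} cited in \cref{rec:perfect-generator}. I would first try to reduce to $Y$ affine by flat base change along an open affine cover, noting that hypothesis (b) restricts well because perfect complexes on the preimage extend up to summands (Thomason--Trobaugh).

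Next, fix $b\in\Dps(X)$, so $b$ is bounded on the right. Then $a\otimes b$ is bounded on the right. By \cref{rec:bounded}, $f_*(a\otimes b)$ is bounded on the right as well, since $f_*$ is right $t$-bounded with some amplitude $d$. Now fix a degree $i$. Say $a\in\Der^{\le r}(X)$. Choose $m$ very negative and, by \cref{rem:approximation}, a triangle $c\to b\to e\to\Sigma c$ with $c$ perfect and $e\in\Der^{\le m}(X)$. Tensoring with $a$ gives $a\otimes e\in\Der^{\le m+r}(X)$, so $f_*(a\otimes e)\in\Der^{\le m+r+d}(Y)$. Choosing $m<i-r-d-1$, the long exact sequence of the pushed-forward triangle identifies $H^i(f_*(a\otimes b))\cong H^i(f_*(a\otimes c))$. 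The latter is coherent by (b). Hence every cohomology sheaf of $f_*(a\otimes b)$ is coherent and it is bounded on the right, so $f_*(a\otimes b)\in\Dpcoh(Y)=\Dps(Y)$.
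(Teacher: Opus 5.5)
Your proof is correct. Its final step, approximating $b$ by perfect complexes via \cref{rem:approximation} and comparing long exact sequences, is the same as the paper's. The difference is in how you get the uniform right $t$-bound on $F(-)\coloneqq f_*(a\otimes -)$.

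The paper never bounds $a$ itself. It passes to the right adjoint $G=\ihom{a,f^!(-)}$ and takes an injective cogenerator $I$ of $\QCoh(Y)$, using $\Derqc(Y)\simeq\Der(\QCoh(Y))$ for noetherian $Y$. It shows $G(I)$ is left-bounded via the left-bounded half of \cref{rec:perfect-generator}, then dualizes against $I$ to get $F(\Der^{\le n}(X))\subseteq\Der^{\le n-k}(Y)$.

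You instead use the right-bounded half of \cref{rec:perfect-generator} to prove something stronger: hypothesis (b) already forces $a\in\Der^{\sqsupset}(X)$. The bound on $F$ then follows at once from right $t$-exactness of $\otimes$ and \cref{rec:bounded}. Your route is more elementary, with no $f^!$ and no injective cogenerator, and it tells you more about $a$. The paper's route works at the level of the functor $F$ and its right adjoint, needing only that $F(g)$ is right-bounded, and it uses the other direction of the reflection fact. \Cref{rec:perfect-generator} records both directions, so relying on the right-bounded one is legitimate.

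The reduction to affine $Y$ that you flag as the delicate point is unnecessary. The structure morphism $p_Y:Y\to\Spec(\bbZ)$ is itself quasi-compact and quasi-separated, so $(p_Y)_*$ is right $t$-bounded by \cref{rec:bounded}. Hence $(p_X)_*\ihom{g,a}\simeq(p_Y)_*f_*\ihom{g,a}$ is right-bounded directly from $f_*\ihom{g,a}\in\Dps(Y)$, for any noetherian $Y$. Your flat base change and Thomason--Trobaugh reduction would also work; it is just superfluous.
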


\begin{proof}
	The nontrivial implication is $(b) \Rightarrow (a)$. Let $F:\Der(X) \to \Der(Y)$ denote the functor $F(-) \coloneqq f_*(a\otimes -)$ with right adjoint $G(-)\coloneqq \ihom{a,f^!(-)}$. Suppose $F(c) \in \Dps(Y)$ for all $c \in \Dperf(X)$. Our goal is to prove that $F(b) \in \Dps(Y)$ for all $b \in \Dps(X)$.

	Recall from~\cref{conv:qcqs} that $\Der(Y)$ denotes $\Derqc(Y)$. However, $\Derqc(Y) \cong \Der(\QCoh(Y))$ since $Y$ is noetherian by~\cite[Proposition~\href{https://stacks.math.columbia.edu/tag/09T4}{09T4}]{stacks-project}. The Grothendieck abelian category~$\QCoh(Y)$ has an injective cogenerator $I \in \QCoh(Y)$ which we will regard as an object in the heart of $\Der(\QCoh(Y))$. Consider the object
	\[
		d\coloneqq G(I) = \ihom{a,f^!(I)} \in \Der(X)
	\]
	and choose a perfect generator $g\in\Dperf(X)$. By hypothesis, $F(g)$ is pseudo-coherent and hence, in particular, bounded on the right (\cref{rem:noetherian-pseudo}). It follows that $\ihom{F(g),I}$ is bounded on the left, since $I$ is injective. Now recall from \cref{rec:perfect-generator} that $(p_X)_*\ihom{g,-}:\Der(X) \to \Der(\bbZ)$ reflects left-bounded complexes, where $p_X:X\to\Spec(\bbZ)$ is the structure morphism. One readily checks from the definitions and the internal adjunction $f_*\ihom{-,f^!(-)}= \ihom{f_*(-),-}$ that
	\[
		(p_X)_*\ihom{g,d} = (p_Y)_*f_*\ihom{g,d} = (p_Y)_*f_*\ihom{F(g),I}
	\]
	and the right-hand side is left-bounded since $\ihom{F(g),I}$ is left-bounded. We conclude that $d$ is left-bounded, say $d \in \Der^{\ge k}(X)$.

	Now consider any right-bounded complex $e \in \Der^{\le n}(X)$. If $j > n-k$ then $\Sigma^{-j} d\in \Der^{\ge k+j}(X) \subseteq \Der^{\ge n+1}(X)$. From the definitions, we have
	\[
		\Hom_{\Der(Y)}(F(e),\Sigma^{-j}I) \simeq \Hom_{\Der(X)}(e,\Sigma^{-j}d)=0.
	\]
	On the other hand, since $I$ is injective,
	\[
		\Hom_{\Der(Y)}(F(e),\Sigma^{-j}I) = H^{-j}\Hom_{\QCoh(Y)}^\bullet(F(e),I) = \Hom_{\QCoh(Y)}(H^j(F(e)),I).
	\]
	Since $I$ is a cogenerator, this implies that $H^j(F(e)) = 0$. This establishes that 
	\begin{equation}\label{eq:F-bound}
		F(\Der^{\le n}(X)) \subseteq \Der^{\le n-k}(Y).
	\end{equation}

	Now consider a pseudo-coherent complex $b \in \Dps(X)$. Our goal is to prove that $F(b) \in \Dps(X)$. Recall from~\cref{rem:approximation} that for any integer $m$ there exists an exact triangle $c_m \to b \to d_m \to \Sigma c_m$ with $c_m \in \Dperf(X)$ and $d_m \in \Der^{\le m}(X)$. We then obtain an exact triangle
	\begin{equation}\label{eq:F-triangle}
		F(c_m) \to F(b) \to F(d_m) \to \Sigma F(c_m).
	\end{equation}
	Our hypothesis implies that $F(c_m)$ is pseudo-coherent since $c_m$ is perfect, while $F(d_m) \in \Der^{\le m-k}(Y)$ by \eqref{eq:F-bound}. Any choice of $m$ thus shows that $F(b)$ is right-bounded. It remains to show that the cohomology modules of $F(b)$ are coherent. Fix any $i\in \bbZ$ and chose a small enough~$m$ such that $m-k<i-1$. Then $H^{i-1}(F(d_m)) = H^i(F(d_m)) = 0$ and the long exact sequence for~\eqref{eq:F-triangle} gives $H^i(F(c_m)) \simeq H^i(F(b))$, which is coherent since $F(c_m)$ is pseudo-coherent. We conclude that $F(b)$ is pseudo-coherent, which completes the proof.
\end{proof}

\begin{Lem}\label{lem:open-quasi-proper}
	Let $j:U \hookrightarrow X$ be a dense open immersion of quasi-compact and quasi-separated schemes. If $j_*:\Der(U) \to \Der(X)$ preserves pseudo-coherent complexes then $U=X$.
\end{Lem}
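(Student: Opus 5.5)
Apply the hypothesis to the unit. Since $\unit_U=\cat O_U$ is pseudo-coherent, $j_*(\unit_U)=Rj_*\cat O_U$ is pseudo-coherent on $X$. By \cref{rem:globally-bounded}, its rightmost cohomology module is then finitely presented. The plan is to extract a contradiction from the degree-zero piece $H^0(j_*\unit_U)=j_*\cat O_U$, the underived pushforward, which is a quasi-coherent $\OX$-algebra.

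First we reduce to $X$ affine. Pseudo-coherence is local, and for an affine open $V\subseteq X$ flat base change gives $(j_*\unit_U)|_V\simeq (j_V)_*\unit_{U\cap V}$. Moreover $U\cap V$ is dense in $V$ and quasi-compact, since $X$ is quasi-separated. So we may take $X=\Spec(R)$ and $U$ a quasi-compact dense open. Now $E\coloneqq Rj_*\cat O_U$ is a pseudo-coherent $R$-complex. It lives in degrees $[0,d]$ and has $H^0(E)=\cat O(U)\eqqcolon S$, a ring receiving $R\to S$.

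The key step is local. Suppose $U\neq X$ and pick $x\in X\sminus U$; since $U$ is open we may choose $x$ closed in $X\sminus U$. Localize at $x$ (again flat base change), replacing $R$ by $\OXx$ and $U$ by $U_x=U\times_X\Spec(\OXx)$. Then $U_x$ is still dense, quasi-compact, and misses the closed point $\frakm$, and $E_x=R(j_x)_*\cat O_{U_x}$ is pseudo-coherent and nonzero. The aim is to show $E_x\dotimes_R k(x)=0$, which forces $E_x=0$ by \cref{rem:pseudo-nak}, a contradiction.

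To prove this vanishing, I would use that $E_x$ is an algebra object pushed forward from $U_x$. Tensoring with the Koszul complex $K$ of finitely many generators of an ideal $I$ with $V(I)=X\sminus U$ (quasi-compactness of $U$ makes such finitely generated $I$ exist) gives, by the projection formula,
\[
E_x\otimes K\simeq j_*(j^*K)=0,
\]
since $j^*K$ vanishes on $U$, being supported on $V(I)$. Hence $E_x\in\Nil$ of the corresponding maps. Now \cref{lem:koszul} together with the support formula \eqref{eq:h-support} and the tensor-product property yields
\[
\Supp(E_x)\cap\Supp(K)=\Supp(E_x\otimes K)=\varnothing.
\]
But $x\in\Supp(K)$, so $x\notin\Supp(E_x)$, that is $E_x\dotimes k(x)=0$. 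Then \cref{rem:pseudo-nak} gives $E_x=0$. On the other hand $E_x\neq0$, since $H^0(E_x)=\cat O(U_x)$ receives the nonzero ring $R$; this is nonzero because $U_x\neq\varnothing$, as $U$ is dense. This contradiction proves $U=X$.

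The main obstacle is justifying the tensor-product property for the support of $E_x\otimes K$ with $E_x$ pseudo-coherent; \cref{def:support} asserts it on $\Dps$, and $K$ is perfect, so this should go through. The other delicate point is checking that density and quasi-compactness survive the localization; the density of $U$ is used exactly to ensure $U_x\neq\varnothing$.
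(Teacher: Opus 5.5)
Your argument is correct, and its skeleton matches the paper's. The residue-field support of $j_*(\unit_U)$ misses $X\sminus U$; pseudo-coherence (\cref{rem:pseudo-nak}) upgrades this to vanishing of the stalks there; and density forces those stalks to be nonzero.

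What differs is how the two ingredients are supplied. The paper gets $\{x : j_*(\unit_U)\dotimes k(x)\neq 0\}=U$ by citing Balmer's identification of the image of $\Spc(j^*)$ with the homological support of $j_*(\unit_U)$. It then uses density abstractly: the stalk support is specialization closed, and a dense open contains the generic points of the irreducible components. You instead prove the needed inclusion by hand. The projection formula gives $j_*(\unit_U)\otimes K\simeq j_*(j^*K)=0$ for a Koszul complex $K$ supported on $X\sminus U$, and K\"unneth over $k(x)$ then kills $j_*(\unit_U)\dotimes k(x)$ for $x\notin U$. You then use density concretely through $H^0$. Your route is more elementary and self-contained. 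The paper's is shorter and stays inside the tensor-triangular framework used elsewhere in the paper.

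One step should be tightened: ``$U_x\neq\varnothing$ as $U$ is dense.'' This genuinely needs quasi-compactness of $U$. It fails, for instance, for the dense open $\mathbb N\subset\beta\mathbb N=\Spec(\prod_{\mathbb N}k)$ at a free ultrafilter. The clean justification is via flat base change:
\[
H^0(E_x)=(j_*\cat O_U)_x=\colim_{V\ni x}\cat O(U\cap V).
\]
This is a filtered colimit of nonzero rings, because every neighbourhood $V$ of $x$ meets $U$, so it is nonzero. The phrase ``receives the nonzero ring $R$'' proves nothing, since a ring receiving a map from a nonzero ring can be zero.

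Much of the remaining scaffolding is unnecessary: choosing $x$ closed in $X\sminus U$, density of $U_x$, \cref{lem:koszul}, $\Nil$, and the top cohomology module. Moreover, since $X\sminus U$ is Thomason closed, you could take a global perfect $K$ with $\Supp(K)=X\sminus U$ and skip the affine and local reductions entirely.
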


\begin{proof}
	The image of the map on spectra ${U =\Spc(\Dperf(U)) \to \Spc(\Dperf(X))=X}$ is the homological support of $j_*(\unit_U)$ by \cite[Theorem~5.12]{Balmer20_bigsupport} and \cite[Corollary~5.11]{Balmer20_nilpotence} which is given by~\eqref{eq:h-support}. Since $j_*(\unit_U)$ is pseudo-coherent by hypothesis, this coincides with~\eqref{eq:support} which is specialization closed. Thus, the dense open $U=\Supp(j_*(\unit_U))$ is specialization closed, which implies that $U=X$. Indeed every point is a specialization of a generic point of an irreducible component, and $U$ contains these generic points since it is dense.
\end{proof}

\begin{Thm}[Nagata's Compactification Theorem]\label{thm:nagata}
	Any separated finite-type morphism $f:X\to Y$ of quasi-compact and quasi-separated schemes factors as a quasi-compact open immersion $j:X \hookrightarrow X'$ followed by a proper morphism $p:X'\to Y$.
\end{Thm}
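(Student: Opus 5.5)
We will not reprove this deep classical result from scratch; the plan is to deduce the stated form from the literature and check that the hypotheses match. Nagata proved it for noetherian schemes. The general qcqs form is due to Deligne (unpublished), with complete accounts by Conrad and independently by Lütkebohmert and Temkin. It is recorded in the Stacks project as the existence of compactifications for separated, finite-type morphisms with quasi-compact and quasi-separated target, Lemma~\href{https://stacks.math.columbia.edu/tag/0F41}{0F41}. So the first step is a citation. The second step is a small bookkeeping check: the open immersion $j$ produced there is quasi-compact. This holds automatically because $X$ is quasi-compact and $X'$ is quasi-separated, the latter since $X'$ is proper over the qcqs scheme $Y$.

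For completeness we would sketch the structure of the argument in the generality we actually use, namely $Y$ noetherian. First, by Zariski's Main Theorem, a separated finite-type $f$ is locally on $X$ a quasi-finite morphism composed with an open immersion into some $\mathbb{P}^n$. Hence $X$ is covered by finitely many opens $U_i$, each admitting an open immersion into a proper $Y$-scheme $\overline{U}_i$.

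The heart of the proof is gluing these local compactifications. We would take the schematic closure of $X$ inside $\prod_Y \overline{U}_i$. Its open subscheme, the locus where the projections restrict to isomorphisms over the relevant $U_i$, fails in general to equal $X$. One then modifies by blowing up along suitable closed subschemes supported away from $X$, in the style of Raynaud--Gruson flattening and Deligne's approach, so that the graphs of the rational maps between the $\overline{U}_i$ become closed. This forces $X$ to embed openly in the closure.

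We expect this gluing and blowing-up step to be the main obstacle. It is exactly where the separatedness of $f$ is needed, to ensure the closures of the graphs meet correctly over $X$, and it is where all the technical difficulty of the known proofs lives. Accordingly, in the paper we would simply cite the theorem rather than reproduce this step.
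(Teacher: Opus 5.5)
Your proposal takes the same route as the paper, which also proves this purely by citation: Conrad's Theorem~4.1 (attributed to Deligne, generalizing Nagata's noetherian theorem) together with Section~0F3T of the Stacks project. Your added check that $j$ is quasi-compact is correct and in fact automatic, since $X$ is quasi-compact and $X'$ is quasi-separated, being proper over the quasi-separated $Y$. Your accompanying sketch is only heuristic and somewhat imprecise. For example, $X$ itself does not map to $\prod_Y \overline{U}_i$; only $U_i$ maps to $\overline{U}_i$. Also, Zariski's Main Theorem is not needed for the local compactifications, since suitable affine opens of $X$ already embed as locally closed subschemes of some $\mathbb{P}^n_Y$. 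Since you defer to the citation rather than relying on the sketch, nothing in the proof depends on it.
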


\begin{proof}
	This is established by \cite[Theorem~4.1]{Conrad07}; see also \cite{Conrad07erratum}. It is a generalization of Nagata's theorem \cite[\S 4~Theorem~2]{Nagata63} from noetherian schemes to quasi-compact and quasi-separated schemes. Conrad attributes the proof to unpublished work of Deligne. See also~\cite[Section~\href{https://stacks.math.columbia.edu/tag/0F3T}{0F3T}]{stacks-project}.
\end{proof}

We can now prove the main theorem:

\begin{Thm}\label{thm:main-body}
	Let $f:X \to Y$ be a finite-type separated morphism of noetherian schemes and let $f_*:\Der(X)\to\Der(Y)$ denote the derived pushforward. The following are equivalent:
	\begin{enumerate}
		\item $f$ is a proper morphism;
		\item $\Dps(X) = \SET{a \in \Der(X)}{f_*(a \otimes b)\in \Dps(Y) \text{ for all } b\in \Dperf(X)}$.
		\item $f_*$ preserves pseudo-coherent complexes;
	\end{enumerate}
\end{Thm}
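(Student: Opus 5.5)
We will prove the cycle of implications $(a)\Rightarrow(b)\Rightarrow(c)\Rightarrow(a)$.

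\emph{$(a)\Rightarrow(b)$.} This is a direct combination of \cref{thm:proper-pseudo} and \cref{lem:whatever-twist}. For a proper $f$, \cref{thm:proper-pseudo} characterizes $\Dps(X)$ as the class of $a$ with $f_*(a\otimes b)\in\Dps(Y)$ for all $b\in\Dps(X)$. \Cref{lem:whatever-twist}, applied to each $a\in\Der(X)$, shows that twisting by all pseudo-coherent $b$ is equivalent to twisting by all perfect $b$. Together these give the equality in $(b)$.

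\emph{$(b)\Rightarrow(c)$.} Take $a\in\Dps(X)$. By the equality in $(b)$, $f_*(a\otimes b)\in\Dps(Y)$ for every perfect $b$. Choosing $b=\unit_X$ gives $f_*(a)\in\Dps(Y)$.

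\emph{$(c)\Rightarrow(a)$.} This is the substantive direction. Apply Nagata's theorem (\cref{thm:nagata}) to factor $f=p\circ j$, where $j:X\hookrightarrow X'$ is a quasi-compact open immersion and $p:X'\to Y$ is proper.

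First reduce to the case where $j$ is dense. Since $X$ is noetherian, $j$ is quasi-compact, so the scheme-theoretic closure $Z$ of $j(X)$ in $X'$ is a closed subscheme. $X$ is an open dense subscheme of $Z$, and $Z\to Y$ is still proper. Replacing $X'$ by $Z$, we may assume $j$ is dense. Also $X'$ is noetherian, being of finite type over the noetherian $Y$.

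It then suffices to show that $j_*$ preserves pseudo-coherent complexes, because \cref{lem:open-quasi-proper} then gives $X=X'$, so $f=p$ is proper. So let $a\in\Dps(X)$. We must show $j_*(a)\in\Dps(X')$, and we do this by verifying criterion $(b)$ for the proper morphism $p$; this is legitimate since $(a)\Rightarrow(b)$ is already established for $p$. For any $c\in\Dperf(X')$, the projection formula gives
\[
p_*\big(j_*(a)\otimes c\big)\simeq p_*j_*\big(a\otimes j^*(c)\big)=f_*\big(a\otimes j^*(c)\big).
\]
Here $a\otimes j^*(c)\in\Dps(X)$ by \cref{rem:pseudo-tt}. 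Hypothesis $(c)$ then puts $f_*(a\otimes j^*(c))$ in $\Dps(Y)$. Hence $j_*(a)$ lies in the right-hand set of $(b)$ for $p$, which equals $\Dps(X')$.

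\emph{Main obstacle.} The delicate point is the density reduction. We need the closure $Z$ to be a genuine closed subscheme, the map $Z\to Y$ to remain proper, and the restricted inclusion $X\hookrightarrow Z$ to be an open immersion. Noetherianity makes all of this standard. One alternative is to check that \cref{lem:open-quasi-proper} adapts directly to the image of $j$, but passing to the closure is cleaner.
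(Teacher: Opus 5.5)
Your proof is correct and follows the paper's argument. Both proofs use the Nagata factorization with a dense open immersion, show that $j_*$ preserves pseudo-coherence via the projection formula and the characterization for the proper map $p$, and then conclude with \cref{lem:open-quasi-proper}.

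There are two cosmetic differences. First, you obtain density by passing to the scheme-theoretic closure yourself; the paper cites Conrad's Remark~4.2, which amounts to the same argument: for quasi-compact $j$ the scheme-theoretic image has underlying set $\overline{j(X)}$ and commutes with restriction to opens. Second, you test $j_*(a)$ against perfect twists via $(a)\Rightarrow(b)$ for $p$, whereas the paper tests against pseudo-coherent twists via \cref{thm:proper-pseudo} directly. Both work.
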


\begin{proof}
	The $(a)\Rightarrow (b)$ implication is \cref{thm:proper-pseudo} together with \cref{lem:whatever-twist} and the $(b)\Rightarrow (c)$ implication is immediate. It remains to prove $(c) \Rightarrow (a)$. By Nagata's Compactification \cref{thm:nagata}, we may factor $f$ as a quasi-compact open immersion $j:X \hookrightarrow X'$ followed by a proper morphism $p:X'\to Y$. Moreover, we may take~$j$ to be schematically dense; see \cite[Remark~4.2]{Conrad07}. Since $Y$ (and hence $X'$) is noetherian, $j$ is dense in the usual topological sense; see \cite[Lemma~\href{https://stacks.math.columbia.edu/tag/083P}{083P}]{stacks-project}. We will show that $f$ is proper by showing that $X=X'$. For this we will show that $j_*$ preserves pseudo-coherence and invoke~\cref{lem:open-quasi-proper}. Thus, suppose $a \in \Dps(X)$. We claim that $j_*(a) \in \Dps(X')$. For any $b \in \Dps(X')$ observe that
	\[
		p_*(j_*(a) \otimes b) \simeq p_*(j_*(a \otimes j^*(b)))\simeq f_*(a\otimes j^*(b))
	\]
	which is contained in $\Dps(Y)$ by the hypothesis~$(c)$ since $a\otimes j^*(b) \in \Dps(X)$. Since~$p$ is proper, we conclude that $j_*(a) \in \Dps(X')$ by~\cref{thm:proper-pseudo}, as desired.
\end{proof}

The theorem can be augmented with analogous criteria involving bounded coherent complexes, as we now explain.

\begin{Lem}\label{lem:preserves-iff}
	Let $f:X\to Y$ be a morphism of noetherian schemes. The derived pushforward $f_*:\Der(X)\to\Der(Y)$ preserves pseudo-coherent complexes if and only if it preserves bounded coherent complexes.
\end{Lem}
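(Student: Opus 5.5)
We prove both implications using the descriptions in \cref{rem:noetherian-pseudo}, namely $\Dps = \Dpcoh$ and $\Dbcoh = \Dps \cap \Dminus$, together with the $t$-exactness properties of $f_*$ from \cref{rec:bounded}.

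\emph{Pseudo-coherent $\Rightarrow$ bounded coherent.} Suppose $f_*$ preserves pseudo-coherent complexes and let $a \in \Dbcoh(X)$. Then $f_*(a) \in \Dps(Y)$ by hypothesis. Since $a \in \Der^{\ge k}(X)$ for some $k$ and $f_*$ is left $t$-exact, $f_*(a) \in \Der^{\ge k}(Y)$, so $f_*(a)$ is left-bounded. Hence $f_*(a) \in \Dps(Y)\cap\Dminus(Y) = \Dbcoh(Y)$.

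\emph{Bounded coherent $\Rightarrow$ pseudo-coherent.} Suppose $f_*$ preserves bounded coherent complexes and let $b \in \Dps(X)$. Fix $i \in \bbZ$; we must show $H^i(f_*b)$ is coherent and that $f_*b$ is right-bounded. Consider the truncation triangle
\[
	\tau^{\le m} b \to b \to \tau^{\ge m+1} b \to \Sigma\tau^{\le m} b .
\]
The object $\tau^{\ge m+1} b$ lies in $\Dbcoh(X)$, because $b$ is right-bounded with coherent cohomology. By \cref{rec:bounded} there is a fixed $d \ge 0$ with $f_*(\Der^{\le m}(X)) \subseteq \Der^{\le m+d}(Y)$, so $f_*(\tau^{\le m} b) \in \Der^{\le m+d}(Y)$. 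Applying $f_*$ to the triangle and taking the long exact sequence shows that
\[
	H^i(f_* b) \simeq H^i\big(f_*(\tau^{\ge m+1} b)\big) \quad\text{whenever } i > m+d+1,
\]
since then both $H^i$ and $H^{i+1}$ of $f_*(\tau^{\le m}b)$ vanish. Choosing $m < i-d-1$, the right-hand side is coherent because $f_*(\tau^{\ge m+1} b) \in \Dbcoh(Y)$ by hypothesis.

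For right-boundedness, say $b \in \Der^{\le n}(X)$. Then $f_*(b) \in \Der^{\le n+d}(Y)$, again by the right $t$-boundedness of \cref{rec:bounded}. Hence $f_*b \in \Dpcoh(Y) = \Dps(Y)$.

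The only delicate input is the uniform right $t$-boundedness of $f_*$, which holds for $f$ between noetherian (hence quasi-compact, quasi-separated) schemes by \cref{rec:bounded}. It is used twice: once to control the error term $f_*(\tau^{\le m} b)$, and once to bound $f_*b$ on the right. Everything else is bookkeeping with long exact sequences.
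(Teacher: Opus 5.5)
Your proof is correct and is essentially the paper's argument. The forward direction goes through $\Dbcoh(Y)=\Dps(Y)\cap\Dminus(Y)$, and the converse uses the truncation triangle $\tau^{\le m}b\to b\to\tau^{\ge m+1}b$ with $m<i-d-1$ together with the uniform right $t$-boundedness of $f_*$; in the forward direction you are more precise than the paper's write-up, since it is the left $t$-exactness of $f_*$ (not the right $t$-bound) that makes $f_*(a)$ bounded on the left.
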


\begin{proof}
	Recall that $f_*(\Der^{\le 0}(X)) \subseteq \Der^{\le d}(Y)$ for some $d\ge 0$. It then follows from~\eqref{eq:Dbcoh-as-Dpsb} that if $f_*$ preserves pseudo-coherent complexes then it preserves bounded coherent complexes. Conversely, suppose $f_*$ preserves bounded coherent complexes and let $a \in \Dps(X)$. The right $t$-boundedness of $f_*$ and \eqref{eq:Dps-as-Dpcoh} reduces the question to showing that $H^i(f_*(a))$ is coherent for all $i$. Choose any $k < i -d -1$ and consider the truncation $\tau^{\le k} a \to a \to \tau^{>k} a \to \Sigma \tau^{\le k}a$. From the associated long exact sequence we obtain $H^i(f_*(a)) \simeq H^i(f_*(\tau^{>k}a))$ which is coherent since $f_*(\tau^{>k}a)$ is a bounded coherent complex by hypothesis.
\end{proof}

\begin{Thm}\label{thm:dbcoh-version}
	Let $f:X \to Y$ be a finite-type separated morphism of noetherian schemes. The following are equivalent:
	\begin{enumerate}
		\item $f$ is a proper morphism;
		\item $\Dbcoh(X) = \SET{a \in \Der(X)}{f_*(a \otimes b)\in \Dbcoh(Y) \text{ for all } b\in \Dperf(X)}$.
		\item $f_*$ preserves bounded coherent complexes.
	\end{enumerate}
\end{Thm}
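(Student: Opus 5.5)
We will prove $(a)\Rightarrow(b)\Rightarrow(c)\Rightarrow(a)$, reducing everything except the first implication to \cref{thm:main-body} via \cref{lem:preserves-iff}.

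The implication $(c)\Rightarrow(a)$ is immediate. By \cref{lem:preserves-iff}, preservation of bounded coherent complexes is equivalent to preservation of pseudo-coherent complexes. \Cref{thm:main-body} $(c)\Rightarrow(a)$ then gives properness.

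For $(b)\Rightarrow(c)$, let $a\in\Dbcoh(X)$. Taking $b=\unit_X$ in $(b)$, which is perfect, shows $f_*(a)\in\Dbcoh(Y)$.

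The substantive step is $(a)\Rightarrow(b)$. The inclusion $\subseteq$ goes as follows. If $a\in\Dbcoh(X)$ and $b$ is perfect, then $a\otimes b\in\Dbcoh(X)$ by \cref{rem:pseudo-tt}. Since $f$ is proper, $f_*$ preserves pseudo-coherent complexes by \cref{thm:grothendieck}, hence preserves bounded coherent complexes by \cref{lem:preserves-iff}.

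For $\supseteq$, suppose $f_*(a\otimes b)\in\Dbcoh(Y)\subseteq\Dps(Y)$ for all $b\in\Dperf(X)$. By \cref{thm:main-body} $(a)\Rightarrow(b)$ we get $a\in\Dps(X)$. By \eqref{eq:Dbcoh-as-Dpsb} it remains to show that $a$ is bounded on the left.

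I will use \cref{rec:perfect-generator}. Choose a perfect generator $g$. Since $g$ is dualizable, $\ihom{g,a}\simeq g^\vee\otimes a$. Then
\[
(p_X)_*\ihom{g,a}\simeq (p_Y)_*f_*(g^\vee\otimes a).
\]
Here $f_*(g^\vee\otimes a)$ is bounded coherent by hypothesis, in particular left-bounded. Because $(p_Y)_*$ is left $t$-exact (\cref{rec:bounded}), the right-hand side is left-bounded. Since $(p_X)_*\ihom{g,-}$ reflects left-bounded complexes by \cref{rec:perfect-generator}, $a$ is left-bounded. Hence $a\in\Dps(X)\cap\Dminus(X)=\Dbcoh(X)$.

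The main obstacle is this left-boundedness step. It rests on citing the non-formal statement in \cref{rec:perfect-generator}, namely that $(p_X)_*\ihom{g,-}$ reflects left-bounded complexes, and on the identification $\ihom{g,a}\simeq g^\vee\otimes a$.

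The cited reflection statement is needed because $f_*$ need not be conservative, as \cref{exa:no-reflect} shows. So a naive argument through $f_*$ alone fails, and the twist by the generator $g$ is essential.

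If that citation proved problematic, I would fall back on the right-adjoint argument of \cref{lem:whatever-twist}, using the injective cogenerator, to transfer bounds.
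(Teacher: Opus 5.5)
Your proposal is correct and follows essentially the same route as the paper. $(c)\Rightarrow(a)$ is reduced to \cref{thm:main-body} via \cref{lem:preserves-iff}. For the $\supseteq$ inclusion, you first get pseudo-coherence of $a$; the paper cites \cref{thm:proper-pseudo} with \cref{lem:whatever-twist}, which amounts to \cref{thm:main-body} $(a)\Rightarrow(b)$. You then get left-boundedness by pushing $\ihom{g,a}\simeq g^\vee\otimes a$ forward to $\Spec(\bbZ)$ and invoking \cref{rec:perfect-generator}. Your explicit use of the left $t$-exactness of $(p_Y)_*$ just spells out a step the paper leaves implicit.
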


\begin{proof}
	The implication $(b)\Rightarrow (c)$ is immediate and the implication $(c) \Rightarrow (a)$ is equivalent, by \cref{lem:preserves-iff}, to the corresponding implication of \cref{thm:main-body}. It remains to establish $(a) \Rightarrow (b)$. The inclusion $\subseteq$ is provided by \cref{thm:grothendieck} bearing in mind \cref{lem:preserves-iff} and the fact that $\Dbcoh(X) \otimes \Dperf(X) \subseteq \Dbcoh(X)$. For the~$\supseteq$ inclusion, consider any complex $a \in \Der(X)$ satisfying $f_*(a\otimes b) \in \Dbcoh(Y)$ for all $b \in \Dperf(X)$. \Cref{thm:proper-pseudo} and \cref{lem:whatever-twist} imply that $a$ is pseudo-coherent, which means $a \in \Dpcoh(X)$ since $X$ is noetherian. Let $g \in \Dperf(X)$ be a perfect generator. By hypothesis, $f_*(\ihom{g,a})=f_*(g^\vee \otimes a)$ is pseudo-coherent and hence left-bounded. It follows that $(p_Y)_*f_*(\ihom{g,a})=(p_X)_*(\ihom{g,a})$ is left-bounded where $p_X$ and~$p_Y$ are the structure morphisms to $\Spec(\bbZ)$. This implies that $a$ is left bounded by \cref{rec:perfect-generator} and hence $a \in \Dbcoh(X)$.
\end{proof}

\bibliographystyle{alpha}%
\bibliography{bibliography}

\section*{Acknowledgements}
{\scriptsize
Let me share an amusing personal anecdote concerning this paper. I proved the equivalence of $(a)$ and~$(b)$ in \cref{thm:main} about ten years ago and I was really excited because I could not find any reference in the literature to known converses to the Serre--Grothendieck finiteness theorem. In fact, papers in the 1980s, such as \cite{Vella86}, explicitly mentioned that no converse was known. I thought it was really cool that I had found an unknown converse to such a fundamental theorem from the 1950s. Unfortunately for me, Amnon Neeman informed me that the result was in fact due to Joseph Lipman. That was the first blow. 

Then, on my first visit to Oakland in California my car was broken into. All that the thief obtained for their efforts was a canvas shopping bag filled to the brim with my research notes, together with a copy of \cite{Vella86}, borrowed from the library. While one can of course reconstruct a lost theorem, two blows in quick succession were too much and the result was submerged in the wash of life. After revisiting the theorem and adding condition $(c)$, I decided to resurrect the project. Fortunately the thief never wrote up the theorem! 

With all the above said, I would like to thank Amnon Neeman for setting me straight concerning the result's history. I'm also grateful to the UC Santa Cruz library for not making me pay for the stolen copy of \cite{Vella86}. Finally, I would like to thank Paul Balmer and Ivo Dell'Ambrogio for discussions concerning Grothendieck duality and possible ways of characterizing bounded complexes of coherent sheaves. In particular, \cite[Theorem~5.21]{BalmerDellAmbrogioSanders16} was the precursor to some of these ideas.

}

\end{document}